\documentclass[12pt,reqno]{amsart}
\usepackage{amsmath} % 提供数学符号和排版命令
\usepackage{amssymb} % 提供数学符号
\usepackage{amsthm}  % 提供定理等环境
\usepackage{graphicx}% 支持插入图片
\usepackage{geometry}% 设置页面边距
\usepackage{babel}   % 支持多语言，这里设置为英语
\usepackage{lipsum}  % 用于生成示例文本
\usepackage{tikz}
\usetikzlibrary{decorations.pathreplacing, calc, arrows.meta}
\usepackage{array}
\usepackage[numbers]{natbib} % Combined and placed before hyperref
\usepackage{float}           % Fixed: Removed hidden non-breaking space
\usepackage{xtab}            % Fixed: Removed hidden non-breaking space
\usepackage{enumerate}
\usepackage{color}
\usepackage{epic}
\usepackage[pagebackref,colorlinks=true]{hyperref} % Consolidated hyperref options

\usepackage{microtype}
\usepackage{mathtools}

\newtheorem{Theorem}{Theorem}[section]
\newtheorem{Lemma}[Theorem]{Lemma}

\numberwithin{equation}{section}
\newtheorem{Example}{Example}[section]
\newtheorem{remark}[Theorem]{Remark}

\newcommand{\len}{\mathrm{len}}
\newcommand{\f}{\mathrm{f}}

\def\bZ{\mathbb{Z}}
\def\mo{\mathrm{mo}}
\def\me{\mathrm{me}}

\newcommand{\tpmod}[1]{\mkern 12mu(\textup{mod}\mkern 6mu #1)}

\newcommand{\bisq}{%
  \begin{tikzpicture}[baseline=-.5pt, scale=0.4]
    \draw (0,0) rectangle (1,1);
    \draw (0,1) -- (1,0);
    \node[scale=0.6] at (0.7, 0.7) {$s$};
    \node[scale=0.6] at (0.3, 0.3) {$k$};
  \end{tikzpicture}%
}

\newcommand{\norsq}{%
  \begin{tikzpicture}[baseline=-.4pt, scale=0.4]
    \draw (0,0) rectangle (1,1);
    \node[scale=0.75] at (0.5, 0.5) {$r$};
  \end{tikzpicture}%
}

\begin{document}

\title[Signed counting and combinatorial proofs]{Signed Counting on Restricted Partitions and Combinatorial Proofs of Three Identities}

\author[S. Fu]{Shishuo Fu}
\address[Shishuo Fu]{College of Mathematics and Statistics \& Center for Discrete Mathematics, Chongqing University, Chongqing 401331, China}
\email{fsshuo@cqu.edu.cn}
\author[C. Wang]{Chenwei Wang$^{\ast}$}
\address[Chenwei Wang]{College of Mathematics and Statistics, Chongqing University, Chongqing 401331, China}
\email{20211357@stu.cqu.edu.cn}
\thanks{$^{\ast}$Corresponding author.}

\date{\today}

	\begin{abstract}
	Signed enumerations of $\ell$-regular partitions by the parity of the number of parts are known to correspond to partitions with congruence conditions. Inspired by the recent combinatorial approaches of Ballantine–Merca and Liu for such identities, we provide combinatorial proofs of three identities for $\ell$-regular partitions and their variants. Two of these were originally established analytically by Hickerson and Robbins, respectively.
	\end{abstract}
	\maketitle % 生成标题、作者和日期信息
    \noindent \textbf{Keywords:} Partition identity; $\ell$-regular partition; signed counting; combinatorial proof.
	%\keywords{Partition identity; $\ell$-regular partition; signed counting; combinatorial proof}
		% \newline \indent 2020 {\it Mathematics Subject Classification}. 11P84, 05A17, 05A15, 33B10.
	%%%%%%%%%%%%%%%%%%%%%%%%%%%%%%%%%%%%%%%%%%%%%%%%%%%%%%%%%%%%%%%%%%
	\section{Introduction}
	A \textit{partition} $\lambda$ of a positive integer $n$ is a finite nonincreasing sequence of positive integers $(\lambda_1, \lambda_2, \ldots, \lambda_r)$ such that $\sum_{i=1}^r \lambda_i = n$. As usual, we call $\lambda_i$ a part of $\lambda$, let $|\lambda|$ denote the \textit{size} (or \emph{weight}) of $\lambda$, i.e. $|\lambda| =\sum_{i=1}^r \lambda_i$, and denote $\len(\lambda)$ the \textit{length} of $\lambda$, i.e. $\len(\lambda) = r$. We denote by $p(n)$ the number of partitions of $n$ and we have the generating function
	\begin{equation}
		\sum_{n=0}^{\infty} p(n)q^n = \frac{1}{(q;q)_\infty} ,
	\end{equation}
	here and throughout, we use the following $q$-series notation \cite[Section 2.2]{andrews1998}: for $|q|<1$,
	\begin{align*}
		&(a;q)_{\infty}=(1-a)(1-aq)(1-aq^{2})\cdots ;\\
		&(a;q)_{0}=1;\\
		&(a;q)_{n}=\frac{(a;q)_{\infty}}{(aq^{n};q)_{\infty}}=(1-a)(1-aq)(1-aq^{2})\cdots(1-aq^{n-1}) ,\quad \text{for some } n > 0.
	\end{align*}
	
	Suppose $S$ is a certain subclass of partitions. For notational convenience and uniformity, we make the convention that when superscript ``$e$'' (resp. ``$o$'') is added as $S^{e}$ (resp.~$S^{o}$), it refers to the subset of $S$ containing partitions that have an even (resp. odd) number of parts. The parameter $n$ in the parentheses indicates that we restrict the set to partitions of $n$. Moreover, whenever an uppercase letter is used for a set, the corresponding lowercase letter refers to its cardinality, and we let
	$$s^{\pm}:=s^{e}-s^{o}.$$
	We refer to this as \emph{the signed counting with respect to the length} over the set $S$.
	
	A partition is called \textit{distinct} if none of its parts repeats. Let $D(n)$ denote the set of distinct partitions of $n$, and $d(n)$ be the number of distinct partitions of $n$. The generating function for $d(n)$ is given by
	\begin{equation}
		\sum_{n=0}^{\infty} d(n)q^n=(-q;q)_{\infty}.
	\end{equation}
	
	The celebrated \textit{Euler's Pentagonal Number Theorem} \cite[Theorem 1.6]{andrews1998} can be stated as a signed counting result on the set of distinct partitions. More precisely, let $D^e(n)$ (resp. $D^o(n)$) be the set of distinct partitions of $n$ into an even (resp. odd) number of parts and let $d^e(n)$ (resp. $d^o(n)$) be its cardinality.
	We have
	\begin{equation}\label{eq:PNT}
		(q;q)_{\infty}=\sum_{n\geq0}(d^e(n)-d^o(n))q^n=\sum_{k=-\infty}^{+\infty}(-1)^{k}q^{\omega(k)},
	\end{equation}
	where $\omega(k):=\frac{(3k-1)k}{2}$ is the generalized pentagonal number, and we use the convention $d^e(0)=1$, $d^o(0)=0$, i.e., the empty partition $\varnothing$ is thought of as having zero parts. This theorem was first established via generating functions by Euler, and Franklin \cite[Theorem~1.6]{andrews1998} provided a combinatorial proof of it which will be recalled in Section \ref{EPT}.
	
	Additionally, there is the famous \textit{Euler's Partition Theorem} \cite[Corollary 1.2]{andrews1998}:
	\begin{equation}
		\frac{1}{(q;q^2)}_{\infty}=(-q;q)_{\infty}.
		\label{Euler}
	\end{equation}
	
	For an integer $\ell > 1$, a partition is called \textit{$\ell$-regular} if none of its parts is divisible by $\ell$. Let $B_{\ell}(n)$ denote the set of $\ell$-regular partitions of $n$. The generating function for $b_{\ell}(n)=|B_{\ell}(n)|$ is given by~\cite[Corollary 1.3]{andrews1998}
	\begin{equation}\label{gf:l-reg}
		\sum_{n=0}^{\infty} b_{\ell}(n)q^n=\frac{(q^{\ell};q^{\ell})_\infty}{(q;q)_\infty}.
	\end{equation}
	
	The signed counting on $B_{\ell}(n)$ have been investigated recently by Liu~\cite[Theorems 1.1 and 1.2]{liu2025} via combinatorial approach, his results split into two cases as follows. When ${\ell}>1$ is an even integer, for any positive integer $n$, we have 
	\begin{equation}
		b^{\pm}_{\ell}(n):=b^e_{\ell}(n)-b^o_{\ell}(n)=(-1)^n d_{\ell}(n),
		\label{eq1}
	\end{equation}
	where $d_{\ell}(n)$ denotes the number of partitions of $n$ into distinct parts which are {\bf either odd or divisible} by $\ell$. When ${\ell}>1$ is an odd integer, for any positive integer $n$, we have
	\begin{equation}
		b^{\pm}_{\ell}(n)=(-1)^n c_{\ell}(n),
		\label{eq2}
	\end{equation}
	where $c_{\ell}(n)$ denotes the number of partitions of $n$ into distinct parts which are {\bf odd and not divisible} by $\ell$.
	
	It is worth noting that when $\ell=2$, \eqref{eq1} is equivalent to Euler's theorem \eqref{Euler}. Moreover, Ballantine and Merca \cite{ballantine20234,ballantine20236} gave combinatorial proofs for the ${\ell}=4,6$ cases of \eqref{eq1}.
	
	This work focuses on combinatorial proofs for several identities involving the signed counting of restricted partitions related to $\ell$-regular partitions. We introduce, for any integers $0\le m < \ell$, the more general partition set $B_{{\ell},m}(n)$, which is the set of partitions of $n$ into parts not congruent to $m$ modulo ${\ell}$ (note that $B_{\ell,0}(n)=B_{\ell}(n)$). Then it is natural to consider the signed counting on this set. Denote by $D_{\ell,m}(n)$ the set of partitions of $n$ into distinct parts which are either odd or congruent to $m$ modulo $\ell$. We have the following signed counting identity dealing with the case $\ell\equiv m\equiv 0\pmod 2$, which could be viewed as our first main result. 
	\begin{Theorem}
		Let ${\ell}>m\ge 0$ be two even integers. For each positive integer $n$, we have
		\begin{equation}
			b_{{\ell},m}^{\pm}(n)=(-1)^nd_{{\ell},m}(n).
			\label{eq3}
		\end{equation}
		\label{thm1}
	\end{Theorem}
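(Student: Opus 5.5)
The plan is to prove \eqref{eq3} first at the level of generating functions, and then to turn each step into a bijection or a sign-reversing involution so that the argument becomes combinatorial, in the spirit of Liu's proofs of \eqref{eq1} and \eqref{eq2}.

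\textbf{Generating function.} Weighting each part by $-1$ gives
\[
\sum_{n\ge0} b^{\pm}_{\ell,m}(n)q^n=\prod_{k\ge1,\ k\not\equiv m \tpmod{\ell}}\frac{1}{1+q^k}.
\]
Replacing $q$ by $-q$ turns the left side into $\sum_n (-1)^n b^{\pm}_{\ell,m}(n)q^n$. On the right, each odd $k$ contributes $1/(1-q^k)$ and each even $k$ contributes $1/(1+q^k)$.

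The key observation uses that $\ell$ and $m$ are both even. Every $k\equiv m \tpmod{\ell}$ is then even, so every odd $k$ survives the restriction. Hence
\[
\sum_{n\ge0}(-1)^n b^{\pm}_{\ell,m}(n)q^n=\prod_{k \text{ odd}}\frac1{1-q^k}\cdot\prod_{\substack{k \text{ even}\\ k\not\equiv m \tpmod{\ell}}}\frac1{1+q^k}.
\]
By Euler's theorem \eqref{Euler}, $\prod_{k\text{ odd}}(1-q^k)^{-1}=(-q;q)_\infty=\prod_{k\text{ odd}}(1+q^k)\prod_{k\text{ even}}(1+q^k)$. The even factors cancel against the second product, except those with $k\equiv m \tpmod{\ell}$. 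This leaves
\[
\prod_{k\text{ odd}}(1+q^k)\prod_{k\equiv m \tpmod{\ell}}(1+q^k),
\]
which is exactly the generating function of $D_{\ell,m}(n)$; for $m=0$ the second product runs over positive multiples of $\ell$. This already proves \eqref{eq3}.

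\textbf{Combinatorial version.} I would mirror the computation in three steps.

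First, since the sign $(-1)^{n+\len(\lambda)}$ equals $(-1)^{\#\text{even parts}}$, I would rewrite $(-1)^n b^{\pm}_{\ell,m}(n)$ as a signed count over $B_{\ell,m}(n)$ in which each partition is weighted by $(-1)^{\#\text{even parts of }\lambda}$.

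Second, I would decompose each $\lambda\in B_{\ell,m}(n)$ into its odd parts $\alpha$ and its even parts $\beta$. I would apply Glaisher's bijection to $\alpha$, obtaining a distinct partition $\mu$ of $|\alpha|$. I would then split $\mu=\mu^{o}\cup\mu^{e}$ into its odd and even parts.

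Third, I would define a sign-reversing involution on pairs $(\mu^{e},\beta)$. Here $\mu^{e}$ is distinct with even parts, and $\beta$ has even parts avoiding the class $m$ modulo $\ell$. Take the largest even part $t\not\equiv m \tpmod{\ell}$ occurring in either $\mu^{e}$ or $\beta$. If $t$ lies in $\mu^{e}$, move it into $\beta$; otherwise move one copy from $\beta$ into $\mu^{e}$. Since $\mu^{e}$ must stay distinct, the second move needs $t\notin\mu^{e}$, which holds in the "otherwise" case because $t\in\beta$ and the first case did not occur. Each move changes the number of even parts of $\beta$ by one, so the sign flips.

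The fixed points are exactly those with $\beta=\varnothing$ and $\mu^{e}$ consisting only of parts $\equiv m \tpmod{\ell}$. These have sign $+1$ and correspond bijectively to $D_{\ell,m}(n)$ via $\mu^{o}\cup\mu^{e}$.

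\textbf{Main obstacle.} The step needing the most care is the involution. One must check that the "largest eligible part" rule is preserved by both moves, so that applying the map twice returns the original pair. One must also check compatibility with Glaisher's map: $\mu^{e}$ is fixed while $\alpha$ is fixed, so changing $\mu^{e}$ changes $\alpha$, and this must be reconciled. I would handle this by defining the involution on triples $(\mu^{o},\mu^{e},\beta)$ and only pulling back through Glaisher's bijection at the end. In that formulation the weight $(-1)^{\#\text{even parts}}$ depends only on $\beta$, since the parts of $\alpha$ are odd.
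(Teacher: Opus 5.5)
Your proposal is correct. The generating-function part is essentially the paper's first proof: both reduce to Euler's identity $1/(-q;q)_\infty=(q;q^2)_\infty$, and you simply substitute $q\to -q$ on the $B_{\ell,m}$ side instead of the $D_{\ell,m}$ side. Your combinatorial proof, however, takes a genuinely different route from the paper's second proof.

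The paper keeps the sign $(-1)^{\len(\lambda)}$ and builds an involution $\psi_{\ell,m}$ directly on $B_{\ell,m}$ from three statistics $\alpha,\beta^{(1)},\beta^{(2)}$. Depending on which of $\alpha$, $2\beta^{(1)}$, $4\beta^{(2)}$ is largest, it merges two copies of $\beta^{(1)}$ or four copies of $\beta^{(2)}$, or splits $\alpha$ into halves or quarters. Its fixed-point set $B^{(0)}$ is then matched with $D_{\ell,m}$ by a separate Glaisher-type bijection $\tau_{\ell,m}$, and the factor $(-1)^n$ comes from checking $|\lambda|\equiv\len(\lambda)\pmod 2$ on $B^{(0)}$.

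You instead absorb $(-1)^n$ at the outset via $(-1)^{n+\len(\lambda)}=(-1)^{\#\text{even parts}}$. You then push the odd parts through Euler's odd--distinct bijection and cancel with a single toggle of the largest even part $\not\equiv m\pmod{\ell}$ between $\mu^e$ and $\beta$. This is a direct combinatorial transcription of $(1+q^k)(1+q^k)^{-1}=1$. Your fixed points are visibly in bijection with $D_{\ell,m}(n)$, so no second bijection or parity lemma is needed.

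Your route is shorter and works uniformly for $m=0$. The paper's four-fold merge needs $4\beta^{(2)}\equiv 2m\not\equiv m\pmod{\ell}$, hence $0<m<\ell$, so the paper defers $m=0$ to Liu. What the paper's route buys is locality: $\psi_{\ell,m}$ only ever merges or splits two or four equal parts of $\lambda$ itself. Your involution, pulled back through Glaisher's map, can trade $2^a$ copies of an odd part for one part at once. For $(\ell,m)=(4,2)$ it sends $1^8\mapsto 8$, like Ballantine--Merca's map, whereas $\psi_{4,2}$ sends $1^8\mapsto 1^4 4$.

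Finally, the ``main obstacle'' you flag is not really one. The triples $(\mu^o,\mu^e,\beta)$ are in weight-preserving bijection with $B_{\ell,m}(n)$, since $\mu^o\cup\mu^e$ is automatically distinct, and the sign depends only on $\len(\beta)$. Defining the involution on triples therefore settles everything.
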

	\begin{remark}\label{rmk:special case}
		We note that the $(\ell,m)=(4,2)$ case was recently established in two ways by Ballantine and Merca \cite[Theorem~1.1(i)]{ballantine20234}, while the $m=0$ case reduces to \eqref{eq1}.
	\end{remark}
	
	The following equivalent form of the right hand side of \eqref{gf:l-reg} 
	$$\frac{(q^{\ell};q^{\ell})_{\infty}}{(q;q)_{\infty}}=\prod_{k\ge 1}(1+q^k+q^{2k}+\cdots+q^{(\ell-1)k})$$
	connects $B_{\ell}(n)$ with $Q_{\ell-1}(n)$\footnote{In representation theory, the term ``$\ell$-regular partition'' refers to $Q_{\ell-1}$; see for instance \cite[p.~251]{JK1981}}, which denotes the set of partitions of $n$ with each part occurring no more than $\ell-1$ times (note that $Q_1(n)=D(n)$). Our next two results are concerned with the signed counting on $Q_r(n)$ for $r>1$. Firstly for the case of even $r$, Hickerson \cite{hickerson1973} showed that, for any positive integer $n$, 
	\begin{equation}
		q^{\pm}_{r}(n)=(-1)^{n}c_{r+1}(n),
		\label{delta}
	\end{equation}
	where $c_{r+1}(n)$ is the same function introduced in \eqref{eq2}. Applying a classical bijection between $Q_{r}(n)$ and $B_{r+1}(n)$ due to Glaisher \cite{glaisher1883} (see Theorem~\ref{thm:Glaisher} in Section~\ref{Gla}), Liu \cite[Theorem 1.4]{liu2025} provided a combinatorial proof of \eqref{delta}.

	In contrast, the signed counting on $Q_r(n)$ for odd $r$ appears to be less straightforward and we have found two versions in the literature, for each of which we shall provide a combinatorial proof featuring sign-reversing involutions.
	
	\begin{Theorem}[\textbf{Hickerson~\cite{hickerson1978}}]
		If $s,t,u,n$ are positive integers with $s$ odd and $1\leq s<t$, then
		\begin{equation}
			q^{\pm}_{2tu-1}(n)=(-1)^{n}\sum_{j}f_{s,t,u}(n-tj^2-(t-s)j),
			\label{eq5}
		\end{equation}
		here $f_{s,t,u}(n)$ is the number of partitions of $n$ in which each odd part occurs at most once and is $\not\equiv \pm s \tpmod{2t}$, and each even part is divisible by $2t$ and occurs less than $u$ times.
		\label{thm2}
	\end{Theorem}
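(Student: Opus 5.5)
The plan is to first identify the generating function identity behind \eqref{eq5}, then realize each factor combinatorially, and finally cancel signs by an involution. With $r=2tu-1$ odd, the signed generating function of $Q_r$ by length is
\[
\prod_{k\ge1}\bigl(1-q^k+q^{2k}-\cdots-q^{rk}\bigr)=\prod_{k\ge1}\frac{1-q^{2tuk}}{1+q^k}=(q;q^2)_\infty\,(q^{2tu};q^{2tu})_\infty .
\]
Replacing $q$ by $-q$ accounts for the factor $(-1)^n$. The claim then becomes
\[
(-q;q^2)_\infty\,(q^{2tu};q^{2tu})_\infty=\sum_j q^{tj^2+(t-s)j}\,F_{s,t,u}(q).
\]
Here $F_{s,t,u}(q)=\prod_{k \text{ odd},\,k\not\equiv\pm s\ (2t)}(1+q^k)\prod_{k\ge1}\frac{1-q^{2tuk}}{1-q^{2tk}}$ is the generating function of $f_{s,t,u}$. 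After cancelling the common factors this reduces to
\[
(q^{2t};q^{2t})_\infty(-q^{s};q^{2t})_\infty(-q^{2t-s};q^{2t})_\infty=\sum_j q^{tj^2+(t-s)j},
\]
which is exactly Jacobi's triple product in base $q^{2t}$. So the combinatorial proof should decompose into three bijective or involutive pieces, mirroring these three steps.

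First, I would pass from $Q_{2tu-1}(n)$ to a set of partitions with a tractable sign. The natural tool is the Glaisher bijection of Theorem~\ref{thm:Glaisher} between $Q_{2tu-1}(n)$ and $B_{2tu}(n)$, keeping track of the length parity; because $2tu$ is even this interacts cleanly with parity. Alternatively I would apply a sign-reversing involution directly on $Q_r$, pairing a multiplicity $2i$ with $2i+1$ of the smallest admissible part. Either way, the survivors should be weighted configurations counted by $(-q;q^2)_\infty(q^{2tu};q^{2tu})_\infty$ up to the global sign $(-1)^n$. Concretely these are pairs (distinct odd parts, distinct multiples of $2tu$ with sign $(-1)^{\#}$), where the parity of $n$ equals the parity of the number of odd parts; this explains the $(-1)^n$ on the right of \eqref{eq5}.

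Second, I would split off the odd parts $\equiv\pm s\pmod{2t}$ and write the distinct multiples of $2tu$ as $(q^{2tu};q^{2tu})_\infty=(q^{2t};q^{2t})_\infty\cdot\prod_k (1-q^{2tuk})/(1-q^{2tk})$. Combinatorially, I would realize the second factor as "multiples of $2t$ occurring fewer than $u$ times," which is a Glaisher-type bijection in the other direction. The signed partitions into distinct multiples of $2t$, together with the distinct odd parts $\equiv s$ or $\equiv 2t-s\pmod{2t}$, then form the triple product side.

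Third, and this is the main obstacle, I would provide an explicit sign-reversing involution proving the triple product $(q^{2t};q^{2t})(-q^s;q^{2t})(-q^{2t-s};q^{2t})=\sum_j q^{tj^2+(t-s)j}$ whose fixed points are one configuration per $j$. I would first try a Franklin-style or Sylvester-type involution, in the spirit of the proof of \eqref{eq:PNT} recalled in Section~\ref{EPT}. This could be realized through a Frobenius-type bijection: pair the parts $\equiv s$ and $\equiv -s$ by their index in the progression to form a "charge" $j$ equal to their count difference, then cancel against the signed multiples of $2t$ by moving a multiple of $2t$ into or out of the staircase formed by the two progressions. The fixed points should be the minimal staircases of $j$ parts in one class, of total weight $tj^2+(t-s)j$ (or its mirror for negative $j$). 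The remaining leftover factor must be compatible with the involution so that the whole map commutes with the bijections of the earlier steps.
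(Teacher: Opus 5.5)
\textbf{There is a genuine gap in your Steps~1--2.} Your generating-function reduction is correct. Your Step~3 is essentially the Wright/Kolitsch--Kolitsch involution the paper uses. The combinatorial route through Steps~1--2, however, fails as described.

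\emph{Step~1.} Glaisher's map $\tau_{2tu-1}$ does not interact cleanly with parity; your claim is backwards. Each split replaces one part by $2tu$ parts, so it changes the length by the odd number $2tu-1$. Indeed, the signed generating function of $B_{2tu}$ is $(-q^{2tu};q^{2tu})_\infty/(-q;q)_\infty$, not $(q^{2tu};q^{2tu})_\infty/(-q;q)_\infty$. The paper points out exactly this failure for odd $r$. Your alternative, pairing multiplicity $2i$ with $2i+1$ of a part, changes the weight, so it is not an involution on $Q_{2tu-1}(n)$. Step~1 can be repaired: a split/merge involution trading one part $2a$ for two parts $a$ (the paper's $\psi_t$ with $t$ replaced by $tu$) lands exactly on your survivors. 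But then the real problem appears.

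\emph{Step~2.} After cancelling down to distinct odd parts together with signed distinct multiples of $2tu$, you must re-expand $(q^{2tu};q^{2tu})_\infty$ as $(q^{2t};q^{2t})_\infty\prod_k(1-q^{2tuk})/(1-q^{2tk})$. This is a signed identity, and no ``Glaisher-type bijection'' can realize it. For $u\ge 2$, the coefficient of $q^{2t}$ comes from no object on the left. On the right it comes from two objects of opposite sign: the part $2t$ placed in the signed factor, or in the unsigned factor. Moving from the cancelled side back to the uncancelled side, and then cancelling again with the triple-product involution, would need an involution-principle argument. You have not supplied one; your last sentence only gestures at it.

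\textbf{The missing idea.} Split off that factor \emph{before} any cancellation, where it is a genuine bijection. Write each multiplicity as $2ta+b$ with $0\le b<2t$ and $0\le a<u$. This gives $\lambda=\lambda^{(1)}\cup\lambda^{(2)}$ with $\lambda^{(1)}\in Q_{2t-1}$ and every multiplicity in $\lambda^{(2)}$ a multiple of $2t$.
\begin{itemize}
\item $\len(\lambda^{(2)})$ is even, so $\lambda^{(2)}$ does not affect the sign.
\item Merging blocks of $2t$ equal parts sends $\lambda^{(2)}$ bijectively onto the even-part component of a partition in $F_{s,t,u}$.
\item Only $Q_{2t-1}$ remains to be cancelled. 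Its signed generating function is $(q;q^2)_\infty(q^{2t};q^{2t})_\infty$, which already carries the factor $(q^{2t};q^{2t})_\infty$ needed for the triple product in base $q^{2t}$.
\end{itemize}
The paper handles this $Q_{2t-1}$ factor with the split/merge involution $\psi_t$, then the parity-preserving bijection $\sigma_t$ onto $D_{2t,0}$, and finally the Kolitsch--Kolitsch involution $\phi_{2t,s}$ applied to the parts $\equiv 0,\pm s\tpmod{2t}$.
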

	
	\begin{Theorem}[\textbf{Robbins~\cite{robbins2002}}]
		Let $r\geq2$, for any positive integer $n$, we have
		\begin{equation}
			q^{\pm}_{2r-1}(n)=\sum_{k=-\infty}^{\infty}(-1)^{k}b_{r}{(\frac{n-\omega(k)}{2})},
			\label{eq6}
		\end{equation}
		where $\omega(k)=\frac{k(3k-1)}{2}$.
		\label{thm3}
	\end{Theorem}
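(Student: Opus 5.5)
The plan is to split each partition in $Q_{2r-1}(n)$ into a distinct partition, which carries all of the sign, and a ``doubled'' partition, which carries none. Then we apply Franklin's involution from Section~\ref{EPT} to the distinct component only. As a check, the generating function is
\[
\sum_{n\ge0}q^{\pm}_{2r-1}(n)q^n=\prod_{k\ge1}\frac{1-q^{2rk}}{1+q^k}=(q;q)_\infty\cdot\frac{(q^{2r};q^{2r})_\infty}{(q^2;q^2)_\infty},
\]
using $1/(-q;q)_\infty=(q;q^2)_\infty=(q;q)_\infty/(q^2;q^2)_\infty$. The two factors are exactly the two sides of the bijection described next.

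\textbf{Step 1 (parity splitting).} Let $\lambda\in Q_{2r-1}(n)$, and let $m_k\le 2r-1$ be the multiplicity of $k$ in $\lambda$. Write $m_k=2a_k+\varepsilon_k$ with $\varepsilon_k\in\{0,1\}$ and $0\le a_k\le r-1$; the bound on $a_k$ holds because $m_k\le 2r-1$.
\begin{itemize}
\item Let $\mu$ be the partition whose parts are the $k$ with $\varepsilon_k=1$. Then $\mu$ is a distinct partition.
\item Let $\nu$ be the partition in which each $k$ appears $a_k$ times. Then $\nu\in Q_{r-1}$.
\end{itemize}
We have $|\mu|+2|\nu|=n$, and the map $\lambda\mapsto(\mu,\nu)$ is clearly a bijection onto the set of such pairs. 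Since $\len(\lambda)=\len(\mu)+2\len(\nu)$, we get $(-1)^{\len(\lambda)}=(-1)^{\len(\mu)}$.

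Next we apply Glaisher's bijection (Theorem~\ref{thm:Glaisher}) between $Q_{r-1}(N)$ and $B_r(N)$ with $N=|\nu|$. This turns $Q_{2r-1}(n)$ bijectively into the set $\mathcal{P}(n)$ of pairs $(\mu,\beta)$ with $\mu\in D(j)$, $\beta\in B_r\bigl(\tfrac{n-j}{2}\bigr)$ and $j\equiv n\pmod 2$. The pair is weighted by $(-1)^{\len(\mu)}$, and this weight equals the original sign of $\lambda$.

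\textbf{Step 2 (sign-reversing involution).} Define $\iota(\mu,\beta)=(\phi(\mu),\beta)$, where $\phi$ is Franklin's involution on distinct partitions. Franklin's map moves the smallest part onto or off the staircase of the largest parts, so it preserves $|\mu|$. Hence $\tfrac{n-|\mu|}{2}$ is unchanged and $\beta$ remains a legitimate second component. The map $\phi$ changes $\len(\mu)$ by exactly one, so $\iota$ is sign-reversing on all pairs where $\phi$ is defined.

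The fixed points are the pairs in which $\mu$ is one of the two pentagonal exceptions of size $\omega(k)$, for $k\in\mathbb{Z}$. For $k>0$ these are the partitions $(2k-1,\dots,k)$ and $(2k,\dots,k+1)$, of sizes $\omega(k)$ and $\omega(-k)$, each with $k$ parts. For $k=0$ it is the empty partition. Each such $\mu$ has sign $(-1)^k$.

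Summing over the fixed points gives $\sum_k(-1)^k\,b_r\bigl(\tfrac{n-\omega(k)}{2}\bigr)$. Here $b_r$ of a non-integer or negative argument is taken to be $0$, which matches the parity requirement in $\mathcal{P}(n)$. This is \eqref{eq6}.

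\textbf{Main obstacle.} I expect the only delicate point to be Step 1: checking carefully that the parity splitting is a genuine bijection. This needs two things. The bound $a_k\le r-1$ must hold exactly, which rests on $2r-1$ being odd, so that a multiplicity $2r-1$ gives $a_k=r-1$ and $\varepsilon_k=1$. Also, distinct values of $k$ must decompose independently, which is immediate because the construction is made multiplicity-wise. After that, Step 2 inherits all its properties directly from Franklin's proof of \eqref{eq:PNT}, because the second coordinate is never touched.
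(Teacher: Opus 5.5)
Your proposal is correct and follows essentially the same route as the paper: the paper also writes $\lambda=\lambda^{(1)}\cup\lambda^{(2)}\cup\lambda^{(2)}$ with $\lambda^{(1)}\in D$ and $\lambda^{(2)}\in Q_{r-1}$, applies Franklin's involution to $\lambda^{(1)}$ only, and finishes with Glaisher's bijection from $Q_{r-1}$ to $B_r$. The only differences are cosmetic: you apply Glaisher's bijection before the involution rather than after, and you add a generating-function sanity check, which does not change the argument.
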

	
	\begin{remark}
		In 1973, Hickerson \cite{hickerson1973} showed a formula for $q^{\pm}_{3}(n)$, then in 1975, Alder and Muwafi \cite{alder1975} found formulae for $q^{\pm}_{5}(n)$ and $q^{\pm}_{7}(n)$. Finally in 1978, Hickerson \cite{hickerson1978} obtained the general formula~\eqref{eq5} for any positive odd integer $r$. We remark that Robbins's formula~\eqref{eq6} is much recent and looks simpler than \eqref{eq5}. In contrast, the right hand side of Hickerson's formula~\eqref{eq5} has a uniform sign relying only on the parity of $n$, and thus is cancellation free.
	\end{remark}
	
	 A key fact that was needed in Liu's combinatorial proof of \eqref{delta} is that Glaisher's bijection will not change the parity of the number of parts when $r$ is even. But this property no longer holds for odd $r$, suggesting that Liu's combinatorial approach to \eqref{delta} may not extend to this case. New ideas need to be introduced in our combinatorial proofs of \eqref{eq5} and \eqref{eq6}.

The rest of our paper is organized as follows. We shall recall three bijections in Section \ref{Pre}. The combinatorial proofs of Thoerems \ref{thm1}, \ref{thm2}, and \ref{thm3} are given in Sections \ref{sec:proof of Thm1}, \ref{Po2}, and \ref{Po3}, respectively. For ease of reference, Table~\ref{tab:Symbols} in the appendix summarizes the notation for the various classes of restricted partitions.

\section{Preliminaries\label{Pre}}
We recall in this section three mappings (one bijection and two involutions) that will play key roles in our proofs of the main theorems. 

\subsection{Glaisher's Bijection from \texorpdfstring{$Q_{r}(n)$ to $B_{r+1}(n)$}{Qr(n) to Br+1(n)}\label{Gla}}
For every $r\ge 1$, Glaisher constructed in 1883 a famous bijection, say $\tau_r$, to show that
\begin{Theorem}[{\cite{glaisher1883}}]\label{thm:Glaisher}
	For any positive integer $n$, we have
	\begin{align*}
		q_{r}(n) = b_{r+1}(n).
	\end{align*}
\end{Theorem}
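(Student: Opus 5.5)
We will prove Theorem~\ref{thm:Glaisher} by constructing Glaisher's bijection $\tau_r\colon Q_r(n)\to B_{r+1}(n)$ explicitly, together with its inverse. The basic tool is base-$(r+1)$ expansion. Every positive integer $k$ can be written uniquely as $k=(r+1)^{a}m$ with $a\ge 0$ and $(r+1)\nmid m$. Every multiplicity $c$ with $0\le c$ has a unique base-$(r+1)$ expansion $c=\sum_{a\ge 0}c_a(r+1)^a$ with digits $0\le c_a\le r$.

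\emph{The map $\tau_r$.} Let $\lambda\in Q_r(n)$, and let $m_k\le r$ be the multiplicity of the part $k$ in $\lambda$. Write each part as $k=(r+1)^{a}m$ with $m$ coprime to divisibility by $r+1$. Replace the $m_k$ copies of $k$ by $m_k(r+1)^{a}$ copies of $m$; this preserves the weight. Doing this for every distinct part and merging gives a partition $\tau_r(\lambda)$ of $n$. All its parts are of the form $m$ with $(r+1)\nmid m$, so $\tau_r(\lambda)\in B_{r+1}(n)$.

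\emph{The inverse map.} Let $\mu\in B_{r+1}(n)$, and let $M_m$ be the multiplicity of each part $m$ in $\mu$. Expand $M_m=\sum_a c_{m,a}(r+1)^a$ with $0\le c_{m,a}\le r$. Replace the $M_m$ copies of $m$ by $c_{m,a}$ copies of $(r+1)^a m$ for each $a$. The resulting parts $(r+1)^a m$ are pairwise distinct as $(m,a)$ varies, because of the uniqueness of the factorization $k=(r+1)^a m$. Each such part occurs at most $r$ times, so the image lies in $Q_r(n)$. Again the weight is preserved.

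\emph{Mutual inverses.} Fix $m$ with $(r+1)\nmid m$. Under $\tau_r$, the parts of $\lambda$ lying in the class $\{(r+1)^a m\}_{a\ge0}$ contribute exactly $\sum_a m_{(r+1)^a m}(r+1)^a$ copies of $m$. Since each $m_{(r+1)^a m}\le r$, this sum is precisely a base-$(r+1)$ expansion. Uniqueness of base-$(r+1)$ digits therefore shows that the inverse map recovers the multiplicities $m_{(r+1)^a m}$. Conversely, applying $\tau_r$ to the image of $\mu$ reassembles $M_m$ from its digits. Hence $\tau_r$ is a bijection and $q_r(n)=b_{r+1}(n)$.

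The only delicate point is this well-definedness and bookkeeping: we need that distinct pairs $(m,a)$ give distinct parts, and that the multiplicity bound $\le r$ matches the digit bound exactly. Both follow from the uniqueness statements above.

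Alternatively, one can verify the identity analytically via
$$\prod_{k\ge1}\frac{1-q^{(r+1)k}}{1-q^k}=\frac{(q^{r+1};q^{r+1})_\infty}{(q;q)_\infty},$$
whose left side is the generating function of $Q_r$ and whose right side is \eqref{gf:l-reg} with $\ell=r+1$. For the paper's later purposes, however, the explicit map $\tau_r$ is what is needed.
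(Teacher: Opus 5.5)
Your proof is correct and uses the same bijection $\tau_r$ that the paper describes (citing Glaisher). Repeatedly splitting a part $(r+1)^a m$ into $r+1$ equal parts yields exactly $(r+1)^a$ copies of $m$, and repeatedly merging $r+1$ equal copies amounts to carrying in base $r+1$; so your digit-expansion description is the closed form of the paper's iterative splitting and merging, and it has the advantage of making well-definedness and invertibility (for instance, independence from the order of merges) transparent through the uniqueness of base-$(r+1)$ digits, which the paper leaves implicit.
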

The mapping $\tau_r$ and its inverse $\tau^{-1}_r$ are briefly described as follows.
\begin{itemize}
	\item $\tau_r: Q_{r}(n) \to B_{r+1}(n)$. If the partition $\lambda\in Q_r(n)$ contains a part, say $\lambda_i$, that is divisible by $r+1$, then we split $\lambda_i$ into $r+1$ parts of size $\lambda_i/(r+1)$. We repeat this procedure until no parts are divisible by $r+1$, and take this final partition to be the image $\tau_r(\lambda)$.
	\item $\tau^{-1}_r: B_{r+1}(n)\to Q_{r}(n)$. If the partition $\mu\in B_{r+1}(n)$ contains a part, say $\mu_i$, that occurs at least $r+1$ times, then we merge $r+1$ copies of $\mu_i$ into a single part of size $(r+1)\mu_i$. We repeat this procedure until all part sizes occur at most $r$ times, and take this final partition to be the preimage $\tau^{-1}(\mu)$.
\end{itemize}

Obviously, the celebrated Euler's ``odd vs. distinct'' theorem corresponds to the case with $r = 1$. Glaisher's bijection $\tau_r$ will be utilized in the proofs of Theorems~\ref{thm1} and \ref{thm3}.

\subsection{Franklin's Involution for Euler's Pentagonal Number Theorem\label{EPT}}
To give a purely combinatorial proof of Euler's Pentagonal Number Theorem \eqref{eq:PNT}, Franklin devised a clever operation on the \emph{Ferrers diagram} associated with every distinct partition. This celebrated involution will be used in Section \ref{Po3} for the proof of Theorem \ref{thm3}. Here the Ferrers diagram refers to a commonly used graphical representation of partitions. We use left-justified and horizontally placed $i$ squares to represent the part of size $i$, arranged from top to bottom. For instance, Example~\ref{542} shows the Ferrers diagram of $\lambda=(5,4,2)$.

For any distinct partition $\lambda=(\lambda_1,\lambda_2,\ldots,\lambda_{\ell})$, we denote $b:=\lambda_{\ell}$ and call it the \emph{bottom} of $\lambda$, and we also let $s$, called the \emph{slope} of $\lambda$, be the largest index $1\le i\le \ell$, such that $\lambda_i=\lambda_1-i+1$. Geometrically, for the Ferrers diagram associated with $\lambda$, the `bottom' and `slope' are given by the lengths of the last row and the rightmost anti-diagonal, respectively.

\begin{Example}\label{542} $\lambda=(5,4,2)$, $b=2$, $s=2$. 
	\begin{center}
		\begin{tikzpicture}[scale=0.5]
			\draw (0,0) grid (2,1);
			\draw (0,1) grid (4,2);
			\draw (0,2) grid (5,3);
			\draw[dashed] (5,3) -- (3,1);
			\draw[dashed] (0,0.5) -- (2,0.5);
			\draw[decorate, decoration={brace, amplitude=5pt, mirror}, thick] (4.1, 1) -- (6.1, 3);
			\node at (5.6,1.5) {s};
			\draw[decorate, decoration={brace, amplitude=5pt, mirror}, thick] (0, -0.1) -- (2, -0.1);
			\node at (1,-1) {b};
		\end{tikzpicture}
	\end{center}
\end{Example}

Given a distinct partition $\lambda$ with bottom $b$ and slope $s$, Franklin's involution $\psi$ is defined with respect to the comparison between $b$ and $s$. There are three cases.
\begin{enumerate}
	\item[Case~1.] If $b=s$ or $b=s+1$, and bottom and slope have a common square like the two diagrams in Example~\ref{eg:b-s-cross}, we keep it unchanged.
	\item[Case~2.] For $\lambda$ not belonging to {Case 1}, if $b>s$, we remove the slope and attach a row of the same length $s$ below the original bottom to form a new bottom. 
	\item[Case~3.] For $\lambda$ not belonging to {Case 1}, if $b\leq s$, we remove the bottom and attach diagonally a line of length $b$ to the right of the original slope to form a new slope.
\end{enumerate}

	We can easily observe that $\psi$ is an involution that fixes the partitions in Case~1, and interchanges the partitions in Case~2 and Case~3. Furthermore, we list without proof the following facts about the involution $\psi$.
	\begin{enumerate}
		\item Denote by $\Omega$ the set of all partitions in Case~1 (i.e., the set of fixed points under $\psi$). The size of each $\lambda\in\Omega$ must be a pentagonal number $\omega(k)=\frac{k(3k-1)}{2}$ and $\len(\lambda)\equiv k \pmod{2}$ for a certain $k\in\bZ$; see Example~\ref{eg:b-s-cross} below.
		\item Conversely, every pentagonal number corresponds to a unique distinct partition that is fixed by $\psi$.
		\item The preimage $\lambda$ and the image $\psi(\lambda)$ has opposite parity for their lengths.
	\end{enumerate}

\begin{Example} \label{eg:b-s-cross}
$\lambda=(5,4,3)$, $b=3$, $s=3$ and $\lambda=(6,5,4)$, $b=4$, $s=3$.
	\begin{center}
				\begin{tikzpicture}[scale=0.5]
					\draw (0,0) grid (3,1);
					\draw (0,1) grid (4,2);
					\draw (0,2) grid (5,3);
					\draw[dashed] (5,3) -- (2,0);
					\draw[dashed] (0,0.5) -- (3,0.5);
					\draw (6,0) grid (10,1);
					\draw (6,1) grid (11,2);
					\draw (6,2) grid (12,3);
					\draw[dashed] (12,3) -- (9,0);
					\draw[dashed] (6,0.5) -- (10,0.5);
				\end{tikzpicture}	
		\end{center}
\end{Example}

\subsection{The Kolitsch-Kolitsch Involution for Jacobi's Triple Product Identity\label{JTP}}
The following involution $\phi_{r,s}$ was due to Kolitsch and Kolitsch~\cite{kolitsch2018}, who used it to give a combinatorial proof of the Jacobi's triple product identity~\cite[Theorem~2.8]{andrews1998} in the form 
\begin{equation}
	(zq^s;q^r)_{\infty}(z^{-1}q^{r-s};q^r)_{\infty}(q^r;q^r)_{\infty}=\sum_{i=-\infty}^{\infty}(-z)^i q^{\frac{r(i^2 -i)}{2}+si},
	\label{eqjtp}
\end{equation}
where $r>s$ are both positive integers. We need it in our proof of Theorem \ref{thm2}.

For the remainder of this subsection, we will make frequent use of a special diagram called \emph{$r$-modular (Ferrers) diagram}, in which we weight each square by $r$ (instead of $1$ in standard Ferrers diagram), and allow upper triangles weighted by $s$ and lower triangles weighted by $k:=r-s$, respectively; see the diagrams in Example~\ref{eg:Kolitsch} for an illustration.

To make the paper self-contained, we briefly recall the involution $\phi_{r,s}$ due to Kolitsch-Kolitsch below. The reader should refer to the original paper~\cite{kolitsch2018} for further information. One key ingredient in Kolitsch-Kolitsch's approach was certain ``paste and cut'' procedure over $r$-modular diagrams, such an operation was reminiscent of Wright's combinatorial proof~\cite{wright1965} (see also \cite[Section~6.2.1]{pak2006}) of Jacobi's triple product identity in another equivalent form. 

Let $\lambda$ be a partition into distinct parts congruent to $0$ or $\pm s $ modulo $r$. It takes three main steps to construct the image $\phi_{r,s}(\lambda)$. First, we split $\lambda$ into three subpartitions: $\lambda_{0}$, $\lambda_{+}$, and $\lambda_{-}$ that consist of parts congruent to $0$, $s$, and $-s$ modulo $r$, respectively. In brief, both $\lambda_{+}$ and $\lambda_{-}$ are represented as properly indented $r$-modular diagrams. These two diagrams are then aligned by their smallest parts to be merged into a single $r$-modular diagram, say $\mu$. The reader is referred to the following example for an illustration. $\lambda_0$ is represented as an $r$-modular diagram as well.

	\begin{Example}\label{eg:Kolitsch}
	The $r$-modular diagram generated from $\lambda_{+}$ and $\lambda_{-}$ in two cases: $\len(\lambda_{+})\ge \len(\lambda_{-})$ or $\len(\lambda_{+})<\len(\lambda_{-})$. 
		\begin{center}
			\begin{tikzpicture} [scale=0.6]			
				\draw (0,5) -- (5,0);	
				\foreach \x in {1,...,5}
				\draw (5,\x) -- (5-\x,\x);
				\foreach \x in {1,...,5}
				\draw (\x,5) -- (\x,5-\x);
				\foreach \x in {0,...,3}
				\foreach \y in {0,...,\x}
				\node at (\x+1.5,5-\y-0.5) {$r$};
				\foreach \x in {0,...,4}
				\node at (\x+0.75,4.75-\x){$s$};
				\draw (5,5) grid (6,2);
				\draw (6,5) grid (7,3);
				\foreach \x in {0,1}
				\foreach \y in {\x,...,2}
				\node at (5.5+\x,2.5+\y){$r$};
				\draw (2,3) grid (5,0);
				\draw (2,0) grid (3,-1);		
				\foreach \x in {1,2,3}
				\node at (2.5,\x-1.5) {$r$};
				\node at (3.5,0.5) {$r$};
				\foreach \x in {1,2,3}
				\node at (\x+1.35,3.3-\x) {$k$};

				\draw (8,5) -- (13,0);
				
				\foreach \x in {1,...,5}
				\draw (\x+8,5-\x) -- (0+8,5-\x);
				\foreach \x in {0,...,4}
				\draw (\x+8,0) -- (\x+8,5-\x);
				
				\foreach \x in {0,...,3}
				\foreach \y in {0,...,\x}
				\node at (2-\x+1.5+8,\y+0.5) {$r$};
				
				\foreach \x in {0,...,4}
				\node at (\x+0.35+8,4.3-\x){$k$};
				
				\draw (3+8,2) grid (5+8,0);
				\draw (1+8,3) grid (6+8,2);
				
				\draw (5+8,0) grid (4+8,-1);
				\draw (2+8,0) grid (0+8,-3);
				\draw (4+8,0) grid (2+8,-2);

				\foreach \x in {3,...,5}
				\node at (\x+0.5+8,2.5) {$r$};
				\node at (4.5+8,1.5) {$r$};
				\foreach \x in {0,...,2}
				\node at (\x+2.75+8,2.75-\x){$s$};
				\foreach \x in {0,...,4}
				\node at (\x+0.5+8,-0.5) {$r$};
				\foreach \x in {0,...,3}
				\node at (\x+0.5+8,-1.5) {$r$};
				\foreach \x in {0,1}
				\node at (\x+0.5+8,-2.5) {$r$};
			\end{tikzpicture}	
		\end{center}
	\end{Example}

Second, in the merged $r$-modular diagram $\mu$, we replace the square \bisq~concatenated from $s$ and $k=r-s$ by the normal square \norsq, preserving the total weight $r$. There are two cases for the ensuing operations.

\begin{enumerate}
	\item[Case 1.] $\len(\lambda_{+})\geq \len(\lambda_{-})$; see the left diagram in Example~\ref{eg:Kolitsch}. We denote by $l$ the length of the longest column in $\mu$ that consists of only squares, and we set $l:=0$ when there are no such columns in $\mu$. Denote by $m$ the length of the first row of $\lambda_0$ (as an $r$-modular diagram) and set $m=0$ if $\lambda_0=\varnothing$. There are three subcases to consider.
	\begin{enumerate}
		\item[(1a)] If $l=m=0$, we simply set $\phi_{r,s}(\lambda)=\lambda$.
		\item[(1b)] If $l>m$, we remove a column of length $l$ from $\mu$ and append it as a new first row to $\lambda_{0}$. The diagrams thus obtained are denoted as $\hat{\mu}$ and $\hat{\lambda}_0$, respectively.
		\item[(1c)] If $m\ge l$ and $m>0$, we remove the first row from $\lambda_0$ and insert a column of length $m$ into $\mu$. Insert it to the far right in the case of $l=0$. The diagrams thus obtained are denoted as $\hat{\mu}$ and $\hat{\lambda}_0$, respectively.
	\end{enumerate}
	\item[Case 2.] $\len(\lambda_{+}) < \len(\lambda_{-})$; see the right diagram in Example~\ref{eg:Kolitsch}. We transpose the merged diagram $\mu$, perform the same operations as Case 1, then transpose it back to get $\hat{\mu}$. 
\end{enumerate}

Third, draw the diagonal line in $\hat{\mu}$ to split it into a partition with parts congruent to $s$ modulo $r$ denoted as $\hat{\lambda}_{+s}$, and a partition with parts congruent to $k\equiv -s$ modulo $r$ denoted as $\hat{\lambda}_{-s}$. Combine the triple $(\hat{\lambda}_0,\hat{\lambda}_{+s},\hat{\lambda}_{-s})$ into a single partition, which is taken to be $\phi_{r,s}(\lambda)$.

For our later use, we calculate the size of the nonempty partitions that are fixed by $\phi_{r,s}$, whose $r$-modular diagrams are depicted below.
	\begin{center}
		\begin{tikzpicture} [scale=0.6]
			\draw (0,5) -- (5,0);
			
			\foreach \x in {1,...,5}
			\draw (5,\x) -- (5-\x,\x);
			\foreach \x in {1,...,5}
			\draw (\x,5) -- (\x,5-\x);

			\foreach \x in {0,...,3}
			\foreach \y in {0,...,\x}
			\node at (\x+1.5,5-\y-0.5) {$r$};
			
			\foreach \x in {0,...,4}
			\node at (\x+0.75,4.75-\x){$s$};
			\draw (6,5) -- (11,0);
			
			\foreach \x in {1,...,5}
			\draw (6+\x,5-\x) -- (6,5-\x);
			\foreach \x in {0,...,4}
			\draw (\x+6,0) -- (6+\x,5-\x);
			
			\foreach \x in {0,...,3}
			\foreach \y in {0,...,\x}
			\node at (8-\x+1.5,\y+0.5) {$r$};
			
			\foreach \x in {0,...,4}
			\node at (\x+6.35,4.3-\x){$k$};
		\end{tikzpicture}	
	\end{center}
The first case corresponds to $\lambda_{-}=\lambda_0=\varnothing$ and $\lambda_{+}$ being a staircase. We denote this $i$-staircase (for some $i>0$) as
	\begin{align}
		\triangle_i(r,s) &:= (s+(i-1)r,s+(i-2)r,\ldots,s+r,s).\label{eq:i-stair>0}
	\end{align}
The second case corresponds to $\lambda_{+}=\lambda_0=\varnothing$ and $\lambda_{-}$ being a staircase. We denote this $i$-staircase (for some $i<0$) as
	\begin{align}
		\triangle_i(r,s) &:= (-s-ir,-s-(i+1)r,\ldots,-s+r).\label{eq:i-stair<0}
	\end{align}
In both cases, it can be computed that $|\triangle_i(r,s)|=\frac{r(i^2 -i)}{2}+si$. The $0$-staircase is simply the empty partition, i.e., $\triangle_0(r,s)=\varnothing$.

\section{Two proofs of Theorem \ref{thm1}}\label{sec:proof of Thm1}
We are going to provide in this section two proofs of Theorem~\ref{thm1}, one analytic and one combinatorial. Since $B_{\ell,\ell}=B_{\ell,0}$ and $D_{\ell,\ell}=D_{\ell,0}$, for notational convenience, we consider the cases $\ell \ge m > 0$ instead of $\ell>m\ge 0$, where $\ell=2\ell'$ and $m=2m'$ are two fixed even integers.

\subsection{An analytic proof of Theorem~\ref{thm1}}

Although the generating function proof of \eqref{eq3} is quite routine, we include it here for the sake of completeness. 
\begin{proof}[1st proof of Theorem~\ref{thm1}]
On the one hand, according to the definition of $D_{\ell,m}$, we see that
\begin{align*}
	\sum_{n\ge 0}d_{\ell,m}(n)q^n &= (-q^m;q^{\ell})_{\infty}(-q;q^2)_{\infty}.
\end{align*}
Replacing $q$ by $-q$, we derive that
\begin{align}\label{eq:thm1-rhs}
\sum_{n\ge 0}(-1)^n d_{\ell,m}(n)q^n &= (-q^m;q^{\ell})_{\infty}(q;q^2)_{\infty}.
\end{align}
On the other hand, the definition of $B_{\ell,m}$ directly implies that
\begin{align*}
	\sum_{\lambda\in B_{\ell,m}}z^{\len(\lambda)}q^{|\lambda|} &= \frac{1}{\prod\limits_{\substack{i=1,\, i\not = m}}^{\ell} (zq^i;q^\ell)_\infty}=\frac{(zq^{m};q^{\ell})_{\infty}}{\prod_{i=1}^{\ell} (zq^i;q^\ell)_\infty}=\frac{(zq^{m};q^{\ell})_{\infty}}{(zq;q)_\infty}.
\end{align*}
Plugging in $z=-1$, we deduce that
\begin{align*}
\sum_{n\ge 0}b^{\pm}_{\ell,m}(n)q^n &= \frac{(-q^{m};q^{\ell})_{\infty}}{(-q;q)_\infty}=(-q^m;q^{\ell})_{\infty}(q;q^2)_{\infty},
\end{align*}
which agrees with the right-hand side of \eqref{eq:thm1-rhs}, as desired.
\end{proof}

\subsection{A combinatorial proof of Theorem~\ref{thm1}}
In this subsection, we give a combinatorial proof of Theorem~\ref{thm1} for the cases $\ell>m>0$, since the case $\ell=m$ corresponds to the identity~\eqref{eq1}, which already has a combinatorial proof by Liu~\cite[Theorem~1.1]{liu2025}. We prefer the frequency notation $\lambda=1^{\f(1)}2^{\f(2)}\cdots$ for a given partition $\lambda$, where $\f(i)$ denotes the number of times that part $i$ occurs in $\lambda$. We associate with a given partition $\lambda\in B_{\ell,m}$ three statistics defined as follows:
\begin{align}
	\alpha_{\ell,m}(\lambda) &:=\max\{\lambda_{i}\mid \lambda_i+1\equiv \f(\lambda_{i})\equiv 1\tpmod{2}\},\\
	\beta_{\ell,m}^{(1)}(\lambda) &:=\max\{\lambda_{i}\mid \f(\lambda_{i})>1,\,\lambda_{i}\not\equiv m'\tpmod{\ell'}\},\\
	\beta_{\ell,m}^{(2)}(\lambda) &:=\max\{\lambda_{i}\mid \f(\lambda_{i})>3,\,\lambda_{i}\equiv m'\tpmod{\ell'}\}.
\end{align}
We make the convention that $\max(S)=0$ when $S=\varnothing$. In what follows, we use the abbreviations $\alpha(\lambda)=\alpha_{\ell,m}(\lambda)$, $\beta^{(1)}(\lambda)=\beta_{\ell,m}^{(1)}(\lambda)$, and $\beta^{(2)}(\lambda)=\beta_{\ell,m}^{(2)}(\lambda)$. When the partition $\lambda$ itself is clear from the context, we even suppress the symbol $\lambda$ to write $\alpha$, $\beta^{(1)}$, and $\beta^{(2)}$.
\begin{remark}\label{ab0}
	Note that for any $\lambda\in B_{\ell,m}$, if $\beta^{(1)}(\lambda)$ and $\beta^{(2)}(\lambda)$ are not both zero, then $\beta^{(1)}(\lambda)\not=2\beta^{(2)}(\lambda)$, since $2\beta^{(2)}(\lambda)\equiv m\pmod{\ell}$, and therefore it cannot be a part of $\lambda$. Furthermore, if $\alpha(\lambda)=\beta^{(1)}(\lambda)=\beta^{(2)}(\lambda)=0$, then $\lambda$ is a partition such that each even part is congruent to $m'$ modulo $\ell'$ and occurring exactly twice, while each odd part is either distinct, or congruent to $m'$ modulo $\ell'$ and occurring twice or thrice.
\end{remark}

Now we present a combinatorial proof of Theorem \ref{thm1} (cases $\ell>m>0$) by composing an involution $\psi_{\ell,m}$ with a bijection $\tau_{\ell,m}$. A concrete example can be found at the end of this subsection.

\begin{proof}[2nd proof of Theorem~\ref{thm1}]	
We first let
\begin{align*}
	B^{(0)}:=\{\lambda\in B_{\ell,m} \mid \alpha=\beta^{(1)}=\beta^{(2)}=0\},
\end{align*}
then further split $B_{\ell,m}\setminus B^{(0)}$ into four disjoint subsets 
$$B_{\ell,m} \setminus B^{(0)}= B^{(1)}\cup B^{(2)}\cup B^{(3)}\cup B^{(4)},$$ 
basing on the relative magnitudes of $\alpha$, $2\beta^{(1)}$, and $4\beta^{(2)}$, where
\begin{align*}
		B^{(1)}&:=\left\{\lambda\mid2\beta^{(1)}>\max\{4\beta^{(2)},\alpha\}\right\}, \\
		B^{(2)}&:=\left\{\lambda\mid 4\beta^{(2)}>\max\{2\beta^{(1)},\alpha\}\right\}, \\
		B^{(3)}&:=\left\{\lambda\mid \alpha\geq \max\{2\beta^{(1)},4\beta^{(2)}\},\, \alpha/2\not\equiv m\tpmod{\ell}\right\}, \\
		B^{(4)}&:=\left\{\lambda\mid \alpha\ge \max\{2\beta^{(1)},4\beta^{(2)}\},\, \alpha/2\equiv m\tpmod{\ell}\right\}. 
\end{align*}
	In view of Remark~\ref{ab0}, the four subsets above are indeed disjoint and cover the set $B_{\ell,m}\setminus B^{(0)}$. Our involution $\psi_{\ell,m}:B_{\ell,m}\to B_{\ell,m}$ is defined as follows. We explain case-by-case how to get the image $\mu:=\psi_{\ell,m}(\lambda)$ for a given $\lambda\in B_{\ell,m}$.
\begin{enumerate}
	\item If $\lambda\in B^{(0)}$, then $\mu:=\lambda$, so the set $B^{0}$ are the fixed points under $\psi_{\ell,m}$.
	\item If $\lambda\in B^{(1)}$, then we merge two parts of size $\beta^{(1)}$ into a single part $2\beta^{(1)}$ and let $\mu$ be the partition thus obtained. For $\lambda\in B^{(1)}$, we have $\beta^{(1)}(\lambda)\not\equiv m'\tpmod{\ell'}$, thus $2\beta^{(1)}(\lambda)\not\equiv m\tpmod{\ell}$, and $2\beta^{(1)}(\lambda)>\max\{4\beta^{(2)}(\lambda),\alpha(\lambda)\}$. Consequently, $\mu\in B_{\ell,m}$ and the part size $2\beta^{(1)}(\lambda)$ occurs one more time in $\mu$ than in $\lambda$. More precisely, since $2\beta^{(1)}(\lambda)>\alpha(\lambda)$, we see that $\f(2\beta^{(1)}(\lambda))$ must be even for $\lambda$ and be odd for $\mu$, implying that
	\begin{align*}
		\alpha(\mu) &=2\beta^{(1)}(\lambda)\geq2\beta^{(1)}(\mu),\\
		\alpha(\mu) &=2\beta^{(1)}(\lambda)>4\beta^{(2)}(\lambda)=4\beta^{(2)}(\mu),\text{ and }\\
		\alpha(\mu)/2 &=\beta^{(1)}(\lambda)\not\equiv m'\tpmod{\ell'}.
	\end{align*}
	These three conditions ensure that $\mu\in B^{(3)}$.
	\item If $\lambda\in B^{(2)}$, then we merge four parts of size $\beta^{(2)}$ into a single part $4\beta^{(2)}$ and let $\mu$ be the partition thus obtained. For $\lambda\in B^{(2)}$, we have $\beta^{(2)}(\lambda)\equiv m'\tpmod{\ell'}$, thus $2\beta^{(2)}(\lambda)\equiv m\tpmod{\ell}$, $4\beta^{(2)}(\lambda)\equiv 2m \not\equiv m\tpmod{\ell}$, and $4\beta^{(2)}(\lambda)>\max\{2\beta^{(1)}(\lambda),\alpha(\lambda)\}$. Consequently, $\mu\in B_{\ell,m}$ and $\f(4\beta^{(2)}(\lambda))$ must be even for $\lambda$ and odd for $\mu$, implying that
	\begin{align*}
		\alpha(\mu) &=4\beta^{(2)}(\lambda)\geq4\beta^{(2)}(\mu),\\
		\alpha(\mu) &=4\beta^{(2)}(\lambda)>2\beta^{(1)}(\lambda)=2\beta^{(1)}(\mu),\\
		\alpha(\mu)/2 &=2\beta^{(2)}(\lambda) \equiv m\tpmod{\ell}.
	\end{align*}
	These three conditions ensure that $\mu\in B^{(4)}$.
	\item If $\lambda\in B^{(3)}$, then we split one part $\alpha$ into two copies of $\alpha/2$ and let $\mu$ be the partition thus obtained. Note that by definition, $\frac{\alpha}{2}\not\equiv m \tpmod{\ell}$, so $\mu \in B_{\ell,m}$. Moreover, $\alpha\not\equiv m\tpmod{\ell}$, thus $\alpha/2\not\equiv m'\tpmod{\ell'}$. Similar arguments as in (2) and (3) lead to the following constraints:
	\begin{align*}
		2\beta^{(1)}(\mu) &=\alpha(\lambda)\ge 4\beta^{(2)}(\lambda)\ge 4\beta^{(2)}(\mu),\\
		2\beta^{(1)}(\mu) &=\alpha(\lambda)>\alpha(\mu).
	\end{align*}
	In the first string of inequalities, the equality $2\beta^{(1)}(\mu)=4\beta^{(2)}(\mu)$ cannot be achieved owing to Remark~\ref{ab0}. Therefore we see that $\mu\in B^{(1)}$.
	\item If $\lambda\in B^{(4)}$, then we split one part $\alpha$ into four copies of $\alpha/4$ and let $\mu$ be the partition thus obtained. Note that $\alpha(\lambda)/2\equiv m\tpmod{\ell}$ thus $\alpha(\lambda)/4\equiv m'\tpmod{\ell'}$, and $\lambda\in B^{(4)}$ implies that $\alpha(\lambda)/4\ge \beta^{(2)}(\lambda)$. Similar arguments as in previous cases give rise to the inequalities:
	\begin{align*}
		4\beta^{(2)}(\mu) &=\alpha(\lambda)\ge 2\beta^{(1)}(\lambda)\ge 2\beta^{(1)}(\mu),\\
		4\beta^{(2)}(\mu) &=\alpha(\lambda)>\alpha(\mu).
	\end{align*}
	Applying again Remark~\ref{ab0} to exclude the equality $4\beta^{(2)}(\mu)=2\beta^{(1)}(\mu)$, we deduce that $\mu\in B^{(2)}$.
\end{enumerate}
Evidently, Cases (2) and (4) (as well as Cases (3) and (5)) feature inverse operations. In summary of all five cases, we conclude that $\psi_{\ell,m}$ is a well-defined, weight-preserving involution over $B_{\ell,m}$, such that $\psi_{\ell,m}(B^{(0)})=B^{(0)}$, and for $i=1,2,3,4$, we have
	\begin{align*}
		\psi_{\ell,m}(B^{(i)}) &= B^{(i+2)},
	\end{align*} 
where the addition in the superindex is modulo $4$, like $\psi_{\ell,m}(B^{(3)})=B^{(5)}=B^{(1)}$, etc. And if $\lambda\in B_{\ell,m}\setminus B^{(0)}$ and $\mu:=\psi_{\ell,m}(\lambda)$, then
\begin{align}\label{psi:len parity}
	\len(\mu) \equiv \len(\lambda)+1 \pmod 2.
\end{align}

By the characterization of the set of fixed points $B^{(0)}$ given in Remark~\ref{ab0}, we know that for every $\lambda\in B^{(0)}$,  
\begin{align}\label{psi:weight-len}
	|\lambda|\equiv \len(\lambda) \pmod 2.
\end{align}
In view of \eqref{psi:len parity} and \eqref{psi:weight-len}, to prove \eqref{eq3} it suffices to show that 
\begin{align}\label{eq:B0=dlm}
	|B^{(0)}|=d_{\ell,m}. 
\end{align}
To this end, we construct a weight-preserving bijection $\tau_{\ell,m}:B^{(0)}\to D_{\ell,m}$ as follows, which is a variant of Glaisher's bijection recalled in Section~\ref{Gla}. 

For any $\lambda\in B^{(0)}$ and a part of it, say $\lambda_i$, we rephrase the characterization given by Remark~\ref{ab0} in terms of the frequency $\f(\lambda_i)$: 
\begin{enumerate}
	\item If $\f(\lambda_i)=1$ then $\lambda_i$ must be odd.
	\item If $\f(\lambda_i)>1$ then $\lambda_i\equiv m'\tpmod{\ell'}$, and either $\lambda_i$ is even with $\f(\lambda_i)=2$, or $\lambda_i$ is odd with $\f(\lambda_i)=2$ or $3$.
\end{enumerate}
Now to get its image $\xi:=\tau_{\ell,m}(\lambda)$, we replace two identical parts $\lambda_i$ in $\lambda$ by a single part $2\lambda_i$, and repeat this process until all parts are distinct. The final partition is taken as $\xi$. Note that $\lambda_i$ is repeated in $\lambda$ precisely when it belongs to case (2) above, whence $2\lambda_i\equiv m\tpmod{\ell}$. Therefore, we see that $\xi\in D_{\ell,m}$ so $\tau_{\ell,m}$ is well-defined.

Conversely, for every $\xi \in D_{\ell,m}$, we can find a unique $\lambda \in B^{(0)}$ such that $\tau_{\ell,m}(\lambda)=\xi$, by splitting every part in $\xi$ that is congruent to $m$ modulo $\ell$ into two equal parts. We trust the reader to verify that the partition thus obtained does belong to $B^{(0)}$. 

Hence, $\tau_{\ell,m}$ is indeed a bijection so we have \eqref{eq:B0=dlm} and the proof is now complete.
\end{proof}
\begin{Example}
	For $\ell=4$, $m=2$, $n=8$, we see
	\begin{align*}
		B_{4,2}(8) &=
		\left\{ 
		8^{1}, 1^{1}7^{1}, 3^{1}5^{1}, 1^{3}5^{1}, 4^{2}, 1^{1}3^{1}4^{1}, 1^{4}4^{1}, 
		1^{2}3^{2}, 1^{5}3^{1}, 1^{8}
		\right\}, \\
		D_{4,2}(8) &=\{1^{1}7^{1}, 2^{1}6^{1}, 3^{1}5^{1}, 1^{1}2^{1}5^{1}\}.
	\end{align*}
	The pairing inside $B_{4,2}(8)$ is given by $\psi_{4,2}$, which explains the cancellation $b^e_{4,2}(8)-b^o_{4,2}(8)$
	\begin{align*}
		4^{2} &\overset{\psi_{4,2}}{\longleftrightarrow} 8^{1},\\
		1^{5}3^{1} &\longleftrightarrow 1^{1}3^{1}4^{1},\\
		1^{8} &\longleftrightarrow 1^{4}4^{1},
	\end{align*}
	and the fixed points $B^{(0)}=\{1^{1}7^{1}, 3^{1}5^{1}, 1^{3}5^{1}, 1^{2}3^{2}\}$. Moreover, the correspondences according to $\tau_{4,2}$ are
	\begin{align*}
		1^{1}7^{1} &\overset{\tau_{4,2}}{\longrightarrow} 1^{1}7^{1},\\
		3^{1}5^{1} &\longrightarrow 3^{1}5^{1},\\
		1^{3}5^{1} &\longrightarrow 1^{1}2^{1}5^{1},\\
		1^{2}3^{2} &\longrightarrow 2^{1}6^{1}.
	\end{align*}
\end{Example}
%\begin{remark}
%	We observe that, with respect to the definitions of the three statistics introduced so far, when $m=\ell$, we see that $\beta_{\ell,\ell}^{(2)}(\lambda)=0$, and the involution $\psi_{\ell,\ell}$ is essentially equivalent to that of Liu~\cite[Section 2]{kolitsch2018}. When $l=4$, $m=0$, our involution differs from, but is essentially equivalent to, the involution of Ballantine and Merca \cite[Section 2.2]{ballantine20234}; one can also verify that it differs from that involution when $l=4$, $m=2$. When $l=6$, $m=0$, our involution is equivalent to the involution of Ballantine and Merca \cite[Section 3.2]{ballantine20236}.
%\end{remark}
\begin{remark}
 For the special case $(\ell,m)=(4,2)$,	our involution $\psi_{4,2}$ differs from Ballantine and Merca's $\varphi$~\cite[Theorem~1.1(i)]{ballantine20234}. Indeed, for the partition $\lambda=1^8$, one sees that $\psi_{4,2}(\lambda)=1^4 4$ while $\varphi(\lambda)=8$.
\end{remark}
%\textbf{Remark.} Actually, in solving this problem, we didn't first determine $\psi_{l,m}$ and then $\tau_{l,m}$. Instead, we first established a proof structure combining an involution and a bijection, and constructed $\tau^{-1}_{l,m}$ which is easy to conceive if familiar with Glaisher's classic bijection.

\section{Proof of Theorem \ref{thm2}\label{Po2}}
In this section, $s$, $t$ and $u$ are positive integers with $s$ being odd and $1\leq s<t$. We first recall a useful operation for making a bigger partition from smaller ones. If $\lambda$ and $\mu$ are two partitions, then the \emph{union} of $\lambda$ and $\mu$, denoted $\lambda\cup\mu$, refers to the partition obtained by taking the multiset union of their parts and arranging the parts in nonincreasing order. For example, if $\lambda=(4,2,1)$ and $\mu=(3,2,2)$, then $\lambda\cup\mu=(4,3,2,2,2,1)$. It is clear from the definition that
\begin{align*}
	\len(\lambda \cup \mu)=\len(\lambda)+\len(\mu).
\end{align*}

Among the three combinatorial proofs constructed in this paper, the proof of Theorem~\ref{thm2} given in this section is the most complicated one and it essentially parallels Hickerson's original analytic proof. Denote by $F_{s,t,u}$ the set of partitions in which each odd part occurs at most once and is $\not\equiv \pm s \tpmod{2t}$, and each even part is divisible by $2t$ and occurs less than $u$ times. We begin with a lemma that takes care of one crucial step in the proof of Theorem~\ref{thm2}.

\begin{Lemma}\label{lem:lambda1}
For positive integers $s$, $t$, and $u$ with $s$ being odd and $1\leq s<t$, the signed counting with respect to the length is the same for $Q_{2t-1}$ and $J_{s,t}$, i.e.,
\begin{align}
|Q^e_{2t-1}|-|Q^o_{2t-1}|=|J^e_{s,t}|-|J^o_{s,t}|.\label{Q and J}
\end{align}
Here $J_{s,t}$ is defined via the $i$-staircase introduced in \eqref{eq:i-stair>0} and \eqref{eq:i-stair<0} as
\begin{align}
		J_{s,t} &= \{\nu \mid \nu=\triangle_i(2t,s)\cup \nu^* \text{ for some $i\in\bZ$ }\},\label{def:Jst}
\end{align}
where $\nu^*\in D$ is a subpartition of $\nu$ whose parts are all odd and $\not\equiv \pm s\tpmod{2t}$.
\end{Lemma}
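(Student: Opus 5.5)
The plan is to compare both signed counts with a common intermediate set, using one involution for each half. At the level of generating functions the identity is clear. Setting $z=-1$ in the length-weighted generating function of $Q_{2t-1}$ gives
\begin{align*}
\prod_{k\ge1}\bigl(1-q^k+q^{2k}-\cdots-q^{(2t-1)k}\bigr)=\prod_{k\ge1}\frac{1-q^{2tk}}{1+q^k}=(q^{2t};q^{2t})_\infty(q;q^2)_\infty .
\end{align*}
For $J_{s,t}$, the staircase $\triangle_i(2t,s)$ has $|i|$ parts and size $t(i^2-i)+si$. So \eqref{eqjtp} with $r=2t$, $z=1$ shows that the signed count of $J_{s,t}$ is
\begin{align*}
(q^s;q^{2t})_\infty(q^{2t-s};q^{2t})_\infty(q^{2t};q^{2t})_\infty\prod_{\substack{j \text{ odd}\\ j\not\equiv\pm s\ (2t)}}(1-q^j),
\end{align*}
which equals the same product. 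The combinatorial proof will follow this factorisation. The intermediate set is $X$, the partitions into distinct parts, each of which is either odd or divisible by $2t$. Its signed count is $(q;q^2)_\infty(q^{2t};q^{2t})_\infty$.

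Second half, from $X$ to $J_{s,t}$. Given $\xi\in X$, separate off $\nu^*$, the parts of $\xi$ that are odd and $\not\equiv\pm s\pmod{2t}$. These are left untouched. The remaining parts are distinct and congruent to $0$ or $\pm s$ modulo $2t$, which is exactly the domain of the Kolitsch--Kolitsch involution $\phi_{2t,s}$ of Section~\ref{JTP}. I will apply $\phi_{2t,s}$ to this remaining piece and keep $\nu^*$ fixed. Two facts are needed.
\begin{enumerate}
\item[(a)] Off its fixed points, $\phi_{2t,s}$ is weight-preserving and reverses the parity of the length. Moving a row of $\lambda_0$ into a column of $\mu$, or the reverse, changes the number of parts by exactly one; I will check this in both Case 1 and Case 2.
\item[(b)] Its fixed points are exactly the staircases $\triangle_i(2t,s)$, $i\in\bZ$.
\end{enumerate}
Together these cancel everything in $X$ except the partitions $\triangle_i(2t,s)\cup\nu^*$, which form precisely $J_{s,t}$. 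Hence the signed counts of $X$ and $J_{s,t}$ agree.

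First half, from $Q_{2t-1}$ to $X$. I want a sign-reversing involution on $Q_{2t-1}$ whose fixed points correspond bijectively to $X$, with the parity of the length tracked. This mirrors the proof of Theorem~\ref{thm1}. The factor $1/(1+x)=1-x+x^2-\cdots$ is handled by Euler's odd--distinct mechanism: repeatedly merge pairs of equal parts into their double, as in $\tau_{\ell,m}$. The factor $1-x^{2t}$ comes from the cap on frequencies.

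Concretely, I will define statistics analogous to $\alpha,\beta^{(1)}$:
\begin{itemize}
\item $\alpha$ is the largest even part occurring an odd number of times;
\item $\beta$ is the largest part $k$ with $\f(k)\ge2$ for which merging two copies of $k$ into $2k$ keeps the frequency of $2k$ at most $2t-1$.
\end{itemize}
The involution compares $\alpha$ with $2\beta$. If $2\beta$ is larger, it merges two copies of $\beta$ into $2\beta$; otherwise it splits one copy of $\alpha$ into two copies of $\alpha/2$. Either move changes the length by one. The fixed points should then be the partitions whose parts $2^a m$ ($m$ odd), once reassembled by a Glaisher-type map $\tau_{2t-1}$ variant, give a part $2^a m$ divisible by $2t$ only when such a part is genuinely blocked by the frequency bound. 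These are to be matched with the elements of $X$, and I will check that length parity and weight line up as in \eqref{psi:weight-len}, possibly after an overall sign $(-1)^n$ that is identical on both sides.

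I expect this first half to be the main obstacle. When $2t$ is not a power of $2$, the frequency cap $2t-1$ interacts with the doubling chain $k,2k,4k,\dots$ in an awkward way. If the direct involution becomes unmanageable, I will instead factor $(1-x^{2t})/(1+x)$ as $\bigl(1-x^{2t}\bigr)\cdot(1-x+x^2-\cdots)$ and treat the two factors separately:
\begin{itemize}
\item the signed set of pairs (odd-part partitions with sign, distinct multiples of $2t$), mapped into $Q_{2t-1}$ by collapsing each block of $2t$ copies of a part $k$ into the single part $2tk$ (which lies in the $(q^{2t};q^{2t})_\infty$ factor), followed by a sign-reversing involution that toggles such blocks;
\item Euler's bijection $\tau_1$, applied only to the odd-part component.
\end{itemize}
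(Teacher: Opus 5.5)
Your second half is the paper's Step~3: on $X=D_{2t,0}$ you freeze the odd parts $\not\equiv\pm s\tpmod{2t}$ and run $\phi_{2t,s}$ on the rest, and its fixed points give $J_{s,t}$. Your generating-function check is also correct.

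The gap is the first half: the involution you specify on $Q_{2t-1}$ is not well defined. Take $t=2$ (allowed, as $t>s\ge 1$) and $\lambda=(2,1,1)\in Q_3$.
\begin{itemize}
\item $\alpha=2$, and $\beta=1$ because merging the two $1$'s gives $\f(2)=2\le 3$. So $\alpha=2\beta$, and your rule splits the $2$, producing $(1,1,1,1)\notin Q_3$.
\item Sending ties to the merge instead gives $(2,1,1)\mapsto(2,2)$. But $(2,2)$ has $\alpha=0$ and $\beta=2$, so it is sent to $(4)$, and the map is not an involution.
\end{itemize}
In fact $(2,1,1)$ must be a fixed point, since it is what corresponds to the part $4\in X$.

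Two restrictions are missing, and each is needed; with only one of them, $(2,1,1)$ still breaks the map.
\begin{itemize}
\item The split statistic may only see even parts $a$ of odd multiplicity with $\f(a/2)\le 2t-3$, so that splitting stays inside $Q_{2t-1}$.
\item The merge statistic may only see repeated parts $b$ with $\f(2b)$ \emph{even}, not merely $\f(2b)\le 2t-2$.
\end{itemize}
These are precisely the paper's $\mo_t$ and $\me_t$, phrased through the decomposition $\lambda=\lambda^{(e)}\cup\lambda^{(o)}$. With them, $\mo_t=2\me_t$ forces both to vanish, so there are no ties and the two moves are mutually inverse.

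Your description of the fixed points and of the ``Glaisher-type'' map to $X$ is also left open, and a doubling chain is the wrong picture when $2t$ is not a power of $2$. With the corrected statistics, the fixed points are the partitions in which every repeated part $a$ has $\f(a)\in\{2t-2,2t-1\}$ and $\f(2a)$ odd, and every even part of odd multiplicity is of this form $2a$. The paper's $\sigma_t$ collapses, in one step, $2t-2$ copies of $a$ and one copy of $2a$ into the single part $2ta$.

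The analogue of \eqref{psi:weight-len} fails here: $(2,1,1)$ has weight $4$ and length $3$. So what you need is that this bijection preserve the parity of the length, which it does because each block loses $2t-2$ parts.

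Your fallback does not close the gap as written. After toggling blocks of $2t$ equal parts, the remaining component is an unrestricted partition weighted by $(-1)^{\len}$. Its signed count matches distinct \emph{odd} partitions, not odd-part partitions. Also, Euler's $\tau_1$ does not preserve length parity, e.g.\ $(2)\mapsto(1,1)$.

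The fallback can be repaired: use inclusion--exclusion on pairs $(\pi,S)$, then an involution on unrestricted partitions whose fixed points are the distinct odd partitions (e.g.\ the case $\ell>n$ of \eqref{eq1}). That is a different argument, and your proposal does not supply it.
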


\begin{proof}
	Take any partition $\lambda\in Q_{2t-1}$, we decompose it as
	\begin{align*}
		\lambda=\lambda^{(e)}\cup\lambda^{(o)},
	\end{align*}
	where for a generic part $\lambda^{(e)}_i$ (resp., $\lambda^{(o)}_i$) in $\lambda^{(e)}$ (resp., $\lambda^{(o)}$), we require that
	\begin{align*}
		&\f(\lambda^{(e)}_i) \equiv 0 \pmod{2}, \text{ and }\f(\lambda^{(e)}_i) \leq 2t-2,\\
		&\f(\lambda^{(o)}_i) =1.
	\end{align*}
	We also introduce
	\begin{align}
		\mo_{t}(\lambda) &:= \max\{\lambda^{(o)}_i\mid\lambda^{(o)}_i/2= \lambda^{(e)}_j\in \bZ,\text{ and }0\le \f(\lambda^{(e)}_j)<2t-2\},\label{e2} \\
		\me_{t}(\lambda) &:= \max\{\lambda^{(e)}_i\mid 2\lambda^{(e)}_i\not\in \lambda^{(o)} \},\label{l2}
	\end{align}
	with the same convention that $\max(S)=0$ when $S=\varnothing$. Specifically, \eqref{e2} implies that $\mo_t(\lambda)=0$ if and only if each even part in $\lambda^{(o)}$ has its half occurring precisely $2t-2$ times in $\lambda^{(e)}$, while \eqref{l2} indicates that $\me_t(\lambda)=0$ if and only if each part in $\lambda^{(e)}$ has its double being a part of $\lambda^{(o)}$. 

  Our journey from $Q_{2t-1}$ to $J_{s,t}$ consists of three steps: $\psi_t$, $\sigma_t$, and $\hat{\phi}_{s,t}$, as illustrated below.
  \begin{align*}
  Q_{2t-1} \xrightarrow[\text{fixed points}]{\text{$\psi_t$}} \hat{Q}_{2t-1} \xrightarrow[]{\text{$\sigma_t$}} D_{2t,0} \xrightarrow[\text{fixed points}]{\text{$\hat{\phi}_{s,t}$}} J_{s,t}.
  \end{align*}

	\begin{enumerate}
		\item[\textbf{Step 1:}] We begin with the involution $\psi_{t}\colon Q_{2t-1} \to Q_{2t-1}$. It is a Franklin-like involution that depends on the two values $\mo_t(\lambda)$ and $2\me_t(\lambda)$. For any $\lambda$ in $Q_{2t-1}$, its image $\mu:=\psi_t(\lambda)$ is constructed according to the following three cases.
		\begin{enumerate}
			\item[Case 1.] If $\mo_t(\lambda)=\me_t(\lambda)=0$, then $\lambda$ is fixed by $\psi_t$, i.e., we let $\mu=\lambda$ and denote by $\hat{Q}_{2t-1}$ the set of all such fixed partitions. For the next two cases, $\mo_t(\lambda)$ and $\me_t(\lambda)$ cannot both be zero.

			\item[Case 2.] If $\mo_t(\lambda)> 2\me_t(\lambda)$, we split the part $\mo_t(\lambda)$ into two copies of $\mo_t(\lambda)/2$, and let $\mu$ be the partition thus obtained. Note that $\mu\in Q_{2t-1}$ and $\mo_t(\mu)<\mo_t(\lambda)=2\me_t(\mu)$.
			
			\item[Case 3.] If $\mo_t(\lambda) <2\me_t(\lambda)$, we merge two copies of $\me_t(\lambda)$ into a single part $2\me_t(\lambda)$, and let $\mu$ be the partition thus obtained. Note that $\mu\in Q_{2t-1}$ and $\mo_t(\mu)=2\me_t(\lambda)>2\me_t(\mu)$.
		\end{enumerate}

	From this construction, we see that $\psi_{t}$ is a well-defined involution over $Q_{2t-1}$ that fixes $\hat{Q}_{2t-1}$ and that $\lambda$ belongs to Case 2 if and only if $\psi_t(\lambda)$ is in Case 3. 
		
	\item[\textbf{Step 2:}] We use the discussion after \eqref{l2} to give an explicit characterization of the set $\hat{Q}_{2t-1}$, i.e., the set of partitions fixed by $\psi_{t}$. A partition $\lambda=\lambda^{(e)}\cup\lambda^{(o)}\in \hat{Q}_{2t-1}$, if and only if it satisfies the following conditions.
	\begin{enumerate}
		\item For every part $a$ in $\lambda^{(e)}$, it occurs precisely $2t-2$ times in $\lambda^{(e)}$ and $2a$ must be a part of $\lambda^{(o)}$.
		\item For every even part $b$ in $\lambda^{(o)}$, $b/2$ must be a part of $\lambda^{(e)}$.
	\end{enumerate}
	We next define a bijection $\sigma_{t}:\hat{Q}_{2t-1}\to D_{2t,0}$, where recall that
		\begin{align*}
			D_{2t,0}:=
			\{\mu\in D \mid \text{if $a\in \mu$ is even, then $a \equiv 0\tpmod{2t}$}\}.
		\end{align*}
	Keeping in mind the two conditions (a) and (b) satisfied by any partition $\lambda=\lambda^{(e)}\cup\lambda^{(o)}\in\hat{Q}_{2t-1}$, the image $\mu:=\sigma_t(\lambda)$ is constructed as follows. For each part $a\in\lambda^{(e)}$, we sum up the $(2t-2)$ copies of $a$ in $\lambda^{(e)}$ and the single part $2a$ in $\lambda^{(o)}$ to get a part of size $2ta$, leaving all the odd parts of $\lambda^{(o)}$ unchanged. Let this new partition be $\mu$, then clearly $\mu\in D_{2t,0}$.
	
  Conversely, for any $\mu\in D_{2t,0}$, we split every even part, say $2ta$, in $\mu$ into $2t-2$ copies of part $a$ and one copy of part $2a$. These new parts, together with the existing odd parts of $\mu$, form a partition denoted as $\lambda$. It is not hard to check that $\lambda\in\hat{Q}_{2t-1}$ and $\sigma_t(\lambda)=\mu$. Hence $\sigma_{t}$ is a bijection as claimed. Moreover, since $\sigma_t$ keeps turning $(2t-2)+1=2t-1$ parts into one part, we observe that
  \begin{align}\label{sigma-len}
  \len(\lambda)\equiv \len(\sigma_t(\lambda))\tpmod 2.
  \end{align}
		
		\item[\textbf{Step 3:}]
		Next, we decompose $\mu \in D_{2t,0}$ into four subpartitions:
		\begin{equation}
			\mu = (\mu_0\cup \mu_{+} \cup \mu_{-}) \cup \mu^*=:\mu^{\dagger} \cup \mu^*,
		\end{equation}
		where $\mu_{+}$, $\mu_{-}$, and $\mu_0$ consist of all parts in $\mu$ congruent to $s$, $-s$, and $0$ modulo $2t$, respectively, while $\mu^*$ is the subpartition that contains all of the remaining parts (necessarily they are all odd parts since $\mu\in D_{2t,0}$). The involution $\hat{\phi}_{s,t}$ on $D_{2t,0}$ is induced by the involution $\phi_{2t,s}$ (for the case $r=2t$) introduced in Section~\ref{JTP}.
		\begin{align*}
			\hat{\phi}_{s,t}: D_{2t,0} &\to D_{2t,0} \\
			\mu^{\dagger} \cup \mu^* &\mapsto \phi_{2t,s}(\mu^{\dagger}) \cup \mu^*.
		\end{align*}
		Consequently, $\mu$ is fixed by $\hat{\phi}_{s,t}$ if and only if $\mu^{\dagger}$ is fixed by $\phi_{2t,s}$, giving rise to the set of fixed points $J_{s,t}$ as defined in \eqref{def:Jst}. Finally, we apply \eqref{sigma-len} to deduce that
		$$\len(\lambda) \equiv \len(\hat{\phi}_{s,t}\circ\sigma_t\circ\psi_t(\lambda))\tpmod 2,$$
		which readily implies \eqref{Q and J}.
\end{enumerate}
\end{proof}

\begin{proof}[Proof of Theorem~\ref{thm2}]
The proof proceeds in three main steps. 
\begin{enumerate}[I.]
	\item For every partition $\lambda\in Q_{2tu-1}$, weighted by $(-1)^{\len(\lambda)}$, we rewrite it as the union of two subpartitions $\lambda=\lambda^{(1)}\cup\lambda^{(2)}$, where $\lambda^{(1)}\in Q_{2t-1}$ and $\lambda^{(2)}$ is a subpartition of $\lambda$ with each part occurring a multiple of $2t$ times and no more than $2t(u-1)$ times. By the \emph{Division Algorithm}, we know that such kind of decomposition always exists and is unique. Using the notation from the previous section, this is equivalent to saying that
\begin{align}
	\f(\lambda^{(1)}_i) & <2t,\nonumber \\
	\f(\lambda^{(2)}_i) & \equiv 0 \tpmod{2t}, \text{ and } \f(\lambda^{(2)}_i)<2tu, \label{lambda2}
\end{align}
where $\lambda^{(1)}_i$ (resp., $\lambda^{(2)}_i$) is a part of $\lambda^{(1)}$ (resp., $\lambda^{(2)}$). Next, we deal with $\lambda^{(1)}$ and $\lambda^{(2)}$ separately. For each $\lambda^{(2)}$, we merge every $2t$ identical parts into a single part to get a partition, say $\tilde{\lambda}^{(2)}$. Then the constraints \eqref{lambda2} ensure that $\tilde{\lambda}^{(2)}\in Q_{u-1}$, and every part of it is divisible by $2t$.
	\item Note that $\lambda^{(1)}\in Q_{2t-1}$, so by Lemma~\ref{lem:lambda1}, either it is paired with another partition with the same weight but opposite sign, or it survives both involutions $\psi_t$ and $\hat{\phi}_{s,t}$, so that $\nu:=\hat{\phi}_{s,t}\circ\sigma_t\circ\psi_t(\lambda^{(1)})$ belongs to $J_{s,t}$, allowing us to write
	\begin{align*}
	\nu &= \triangle_i(2t,s)\cup \nu^*,
	\end{align*}
	where $i\in\bZ$ and $\nu^*\in D$ contains only odd parts that are $\not\equiv \pm s\tpmod{2t}$.
	\item Note that the union $\tilde{\lambda}^{(2)}\cup \nu^*$ is a partition in $F_{s,t,u}$. Conversely, every partition from $F_{s,t,u}$ can be uniquely written as such a union, all it takes is to separate even parts from odd parts.
\end{enumerate}
In summary of the above steps, we see that beginning with a partition $\lambda=\lambda^{(1)}\cup\lambda^{(2)}$ from $Q_{2tu-1}$, on the one hand $\lambda^{(2)}$ is mapped to $\tilde{\lambda}^{(2)}$. On the other hand, if $\lambda^{(1)}$ survives both involutions, i.e.,
$$\hat{\phi}_{s,t}\circ\sigma_t\circ\psi_t(\lambda^{(1)}) = \sigma_t(\lambda^{(1)})=:\nu,$$
then $\nu=\triangle_i(2t,s)\cup \nu^*\in J_{s,t}$, and 
$$\tilde{\lambda}^{(2)}\cup \nu^*=:\eta \in F_{s,t,u}.$$
Furthermore, tracking the weights, we see that
\begin{align}
|\lambda| &= |\lambda^{(1)}|+|\lambda^{(2)}| = |\nu|+|\tilde{\lambda}^{(2)}|= |\triangle_i(2t,s)|+|\nu^*|+|\tilde{\lambda}^{(2)}|=t(i^2-i)+si+|\eta|.\label{eq:weight}
\end{align}
While the sign transforms as
\begin{align}
(-1)^{\len(\lambda)} &= (-1)^{\len(\lambda^{(1)})} \stackrel{*}{=} (-1)^{\len(\nu)} \stackrel{\dagger}{=} (-1)^{|\nu|} = (-1)^{|\lambda^{(1)}|} = (-1)^{|\lambda|}.\label{eq:length}
\end{align}
The equality marked $*$ follows from \eqref{sigma-len}, while the one marked $\dagger$ uses the fact that $\nu$ has only odd parts, so $\len(\nu)\equiv |\nu|\tpmod 2$.

Combining \eqref{eq:weight} with \eqref{eq:length}, we deduce \eqref{eq5} and the proof is now complete.
\end{proof}

\begin{Example}
	For $s=1$, $t=u=2$, $n=9$, note that all partitions in $F_{1,2,2}$ are distinct partitions into multiples of $4$. More precisely, we see that
	\begin{align*}
	& Q_{7}(9)=\left\{
		\begin{aligned}
			& 9^{1}, 1^{1}8^{1}, 2^{1}7^{1}, 1^{2}7^{1}, 3^{1}6^{1}, 1^{1}2^{1}6^{1}, 1^{3}6^{1}, 4^{1}5^{1}, 1^{1}3^{1}5^{1}, 2^{2}5^{1}, 1^{2}2^{1}5^{1}, 1^{4}5^{1}, 1^{1}4^{2}, \\
			& 2^{1}3^{1}4^{1}, 1^{2}3^{1}4^{1}, 1^{1}2^{2}4^{1},1^{3}2^{1}4^{1}, 1^{5}4^{1}, 3^{3}, 1^{1}2^{1}3^{2}, 1^{3}3^{2}, 2^{3}3^{1},1^{2}2^{2}3^{1}, 1^{4}2^{1}3^{1}, 1^{6}3^{1}, \\
			& 1^{1}2^{4},1^{3}2^{3}, 1^{5}2^{2}, 1^{7}2^{1}
		\end{aligned}
		\right\},\\
	&\bigcup_{j=-\infty}^{\infty}F_{1,2,2}(9-2j^2-j)=F_{1,2,2}(9)\cup F_{1,2,2}(6)\cup F_{1,2,2}(8)\cup F_{1,2,2}(3) = \{8^{1}\}.
	\end{align*}
	
	We have the pairings inside $Q_3$ given by $\psi_{2}$,
	\begin{align*}
		1^{1}8^{1} &\overset{\psi_{2}}{\longleftrightarrow} 1^{1}4^{2}, &\quad 2^{1}3^{1}4^{1} &\overset{\psi_{2}}\longleftrightarrow 2^{3}3^{1},\\
		2^{1}7^{1} &\longleftrightarrow 1^{2}7^{1},&\quad 1^{2}3^{1}4^{1} &\longleftrightarrow 1^{2}2^{2}3^{1},\\
		3^{1}6^{1} &\longleftrightarrow 3^{3}, &\quad 1^{3}2^{1}4^{1} &\longleftrightarrow 1^{3}2^{3},\\
		1^{1}2^{1}6^{1} &\longleftrightarrow 1^{1}2^{1}3^{2}, &\quad 1^{1}4^{1} &\longleftrightarrow 1^{1}2^{2},\\
		1^{3}6^{1} &\longleftrightarrow 1^{3}3^{2}, &\quad 2^{1}3^{1} &\longleftrightarrow 1^{2}3^{1},\\
		4^{1}5^{1} &\longleftrightarrow 2^{2}5^{1}.
	\end{align*}
	as well as the fixed points $\hat{Q}_{3}=
	\left\{
	9^{1}, 1^{1}3^{1}5^{1}, 1^{2}2^{1}5^{1}, 5^{1}, 1^{1}2^{2}4^{1}, 1^{1}, 1^{3}2^{1}
	\right\}$.
	
	Next, the correspondences according to $\sigma_{2}:\hat{Q}_3\to D_{4,0}$ are
	\begin{align*}
		9^{1} &\overset{\sigma_{2}}{\longrightarrow} 9^{1}, & 1^{1}2^{2}4^{1} &\overset{\sigma_{2}}\longrightarrow 1^{1}8^{1},\\
		1^{1}3^{1}5^{1} &\longrightarrow 1^{1}3^{1}5^{1}, &1^{1} &\longrightarrow 1^{1},\\
		1^{2}2^{1}5^{1} &\longrightarrow 4^{1}5^{1},  &1^{3}2^{1} &\longrightarrow 1^{1}4^{1}.\\
		5^{1} &\longrightarrow 5^{1},
	\end{align*}
	so that $D_{4,0} =
	\left\{
	9^{1}, 1^{1}3^{1}5^{1}, 4^{1}5^{1}, 5^{1}, 1^{1}8^{1}, 1^{1}, 1^{1}4^{1}
	\right\}
	$.
	
	Then we have the correspondences inside $D_{4,0}$
	given by $\hat{\phi}_{1,2}$
	\begin{align*}
		9^{1} &\overset{\hat{\phi}_{1,2}}{\longrightarrow} 4^{1}5^{1},\\
		1^{1}3^{1}5^{1} &\longrightarrow 1^{1}8^{1},\\
		5^{1} &\longrightarrow 1^{1}4^{1},
	\end{align*}
	and the fixed points $J_{1,2}=\{1^{1}\}$. This partition traces back to $1^1 2^4\in Q_7(9)$, which has the decomposition $1^1 2^4 = \lambda^{(1)}\cup\lambda^{(2)}= 1^1 \cup 2^4$. In turn we have $\tilde{\lambda}^{(2)}=8^1$, $\hat{\phi}_{1,2}\circ\sigma_2\circ\psi_2(1^1)=1^1=\triangle_1(4,1)\cup\varnothing$, and hence $8^1\cup\varnothing=8^1\in F_{1,2,2}(8)$.
	
	% Finally, the correspondence according to $\tau_{1,2,2}$ is
	% \begin{equation*}
	% 	1^{1}2^{4} \overset{\tau_{1,2,2}}{\longrightarrow} 8^{1}.
	% \end{equation*}
\end{Example}

\section{Proof of Theorem \ref{thm3} \label{Po3}}
In this section, we fix $r\geq 2$ and provide a combinatorial proof of Theorem \ref{thm3}. 
\begin{proof}[Proof of Theorem~\ref{thm3}]
	For any $\lambda$ in $Q_{2r-1}$, we can uniquely express $\lambda$ as the union of three subpartitions: 
	\begin{equation*}
		\lambda = \lambda^{(1)} \cup \lambda^{(2)}\cup\lambda^{(2)},
	\end{equation*}
	where $\lambda^{(1)}\in D$ and $\lambda^{(2)}\in Q_{r-1}$. As a result, Franklin's involution $\psi$ over $D$ naturally induces an involution over $Q_{2r-1}$:
	\begin{align*}
	\hat{\psi}_r: Q_{2r-1} &\to Q_{2r-1} \\
	\lambda^{(1)} \cup \lambda^{(2)}\cup\lambda^{(2)} &\mapsto \psi(\lambda^{(1)}) \cup \lambda^{(2)}\cup\lambda^{(2)},
	\end{align*}
	so that $\lambda$ is fixed by $\hat{\psi}_r$ if and only if $\lambda^{(1)}$ is fixed by $\psi$.

Now we apply Facts (1)--(3) for Franklin's involution $\psi$ given in Section~\ref{EPT} to derive that
\begin{align*}
\len(\hat{\psi}_r(\lambda)) \equiv \begin{cases}
\len(\lambda)+1 \tpmod 2 & \text{if $\hat{\psi}_r(\lambda)\neq \lambda$,} \\
\len(\lambda)\equiv k \tpmod 2 & \text{if $\hat{\psi}_r(\lambda)= \lambda$ and $|\lambda^{(1)}|=\omega(k)$ for some $k\in \bZ$.}
\end{cases}
\end{align*}
Evidently, the first case contributes nothing to the signed counting $q_{2r-1}^{\pm}$, while the second case produces
\begin{align}
q^{\pm}_{2r-1}(n) &= \sum_{k=-\infty}^{\infty}(-1)^k q_{r-1}(\frac{n-\omega(k)}{2}).\label{eq:q_2r-1=q_r-1}
\end{align}
Finally, we deduce \eqref{eq6} from \eqref{eq:q_2r-1=q_r-1} by applying Glaisher's bijection for Theorem~\ref{thm:Glaisher}.

\end{proof}

%%%%%%%%%%%%%%%%%%%%%%%%%%%%%%%%%%%%%%%%%%%%%%%%%%%
\section*{Acknowledgement}
Shishuo Fu was supported by the National Natural Science Foundation of China grant 12171059 and the Fundamental Research Funds for the Central Universities (grant no.~2025CDJ-IAISYB-008).

%We are grateful to the anonymous referees for making useful suggestions on revising this paper. 
\newpage 
\appendix

\section{Notation}
Here we will list most of the symbols appearing in this paper, along with their meanings, in the order of their occurrences in our paper.
% \begin{center}
% 	\textbf{List of Symbols}
% \end{center}

	\begin{table}[H]
			\centering  % 表格居中
	
	\begin{tabular}{|p{4cm}<{\centering} |p{11cm}|}  % 列格式（|表示竖线，c居中）
		
		\hline
		Notation & Definition \\
		\hline
		$S$ & The set of partitions that satisfy certain conditions.   \\
		\hline
		$s$ & The number of partitions in $S$.\\
		\hline
		$S^e$(resp. $S^o$) & The subset of partitions whose number of parts is even(resp. odd).\\
		\hline
		$s^e$(resp. $s^o$) & The number of partitions in $S^e$(resp. $S^o$).\\
		\hline  
		$s^{\pm}$ & $s^{\pm}:=s^e-s^o$\\
		\hline
		$D$ & The set of distinct partitions. \\
		\hline
		%			$d(n)$ & The number of distinct partitions of $n$. \\
		%			\hline
		%			$D^e(n)$(resp. $D^o(n)$) & The set of distinct partitions of $n$ with an even(resp. odd) number of parts.	\\
		%			\hline
		%			$d^e(n)$(resp. $d^o(n)$) & The number of distinct partitions of $n$ with an even(resp. odd) number of parts. \\
		%			\hline
		$\omega(k)$ & Pentagonal numbers $\omega(k)=\frac{(3k-1)k}{2}.$ \\
		\hline
		$B_{\ell}$ & The set of $\ell$-regular partitions. \\
		\hline
		%			$b_{\ell}(n)$ & The number of $\ell$-regular partitions of $n$. \\
		%			\hline
		%			$B^e_{\ell}(n)$(resp. $B^o_{\ell}(n)$) & The set of $\ell$-regular partitions of $n$ into an even(resp. odd) number of parts. \\
		%			\hline
		%			$b^e_{\ell}(n)$(resp. $b^o_{\ell}(n)$) & The number of $\ell$-regular partitions of $n$ into an even(resp. odd) number of parts. \\
		%			\hline
		$d_{\ell}$ & The number of distinct partitions with each part either odd or divisible by $\ell$. \\
		\hline
		$c_{\ell}$ & The number of distinct partitions with each part odd and not divisible by $\ell$. \\
		\hline
		$B_{\ell,m}$ & The set of partitions with each part not congruent to $m$ modulo $\ell$. \\
		\hline
		%			$b_{\ell,m}(n)$ & The number of partitions of $n$ with each part not congruent to $m$ modulo $\ell$. \\
		%			\hline
		%			$B_{\ell,m}^e(n)$(resp. $B_{\ell,m}^o(n)$) & The set of partitions of $n$ into an even(resp. odd) number of parts which are not congruent to $m$ modulo $\ell$. \\
		%			\hline
		%			$b_{\ell,m}^e(n)$(resp. $b_{\ell,m}^o(n)$) & The number of partitions of $n$ into an even(resp. odd) number of parts which are not congruent to $m$ modulo $\ell$. \\
		%			\hline
		$D_{\ell,m}$ & The set of distinct partitions with each part either odd or congruent to $m$ modulo $\ell$. \\
		\hline
		%			$d_{\ell,m}(n)$ & The number of partitions of $n$ with each part either odd or congruent to $m$ modulo $\ell$. \\
		%			\hline
		$Q_{i}$ & The set of partitions with each part occurring at most $i$ times \\
		\hline
		%			$q_{i}(n)$ & The number of partitions of $n$ with each part occurring at most $i$ times \\
		%			\hline
		%			$Q_{i}^e(n)$(resp. $Q_{i}^o(n)$) & The set of partitions of $n$ into an even(resp. odd) number of parts with each part occurring at most $i$ times \\
		%			\hline
		%			$q_{i}^e(n)$(resp. $q_{i}^o(n)$) & The number of partitions of $n$ into an even(resp. odd) number of parts with each part occurring at most $i$ times. \\
		%			\hline
		%			$q^{\pm}_{r}(n)$ & $q^{\pm}_{r}(n)=q_{r}^e(n)-q_{r}^o(n)$. \\
		%			\hline
		$F_{s,t,u}$ & The set of partitions in which each odd part occurs at most once and is $\not\equiv \pm s \pmod{2t}$ and in which each even part is divisible by $2t$ and occurs less than $u$ times. \\
		\hline
		%			$f_{s,t,u}(n)$ & The number of partitions of $n$ in which each odd part occurs at most once and is $\not\equiv \pm s \pmod{2t}$ and in which each part is divisible by $2t$ and occurs less than $u$ times. \\
		%			\hline
	\end{tabular}
	\smallskip
	\caption{A list of symbols}\label{tab:Symbols}  % 交叉引用标签
	\end{table}

\end{document}